\DeclareMathAlphabet{\mathcalligra}{T1}{calligra}{m}{n}
\newtheorem{theorem}{Theorem}
\newtheorem{corollary}[theorem]{Corollary}
\newtheorem{lemma}[theorem]{Lemma}
\newtheorem{conjecture}[theorem]{Conjecture}
\newtheorem{proposition}[theorem]{Proposition}
\newtheorem*{proposition*}{Proposition}
\newtheorem*{theorem*}{Theorem}
\theoremstyle{definition}
\newtheorem*{remark}{Remark}
\theoremstyle{remark}
\newcommand{\Q}{\mathbb{Q}}
\newcommand{\Z}{\mathbb{Z}}
\newcommand{\N}{\mathbb{N}}
\renewcommand{\H}{\mathbb{H}}
\newcommand{\ord}{{\text {\rm ord}}}
\newcommand{\xuparrow}[1]{%
	{\left\uparrow\vbox to #1{}\right.\kern-\nulldelimiterspace}
}
\newcommand{\SL}{{\text {\rm SL}}}
\renewcommand{\pmod}[1]{\  \,  \left(  \operatorname{mod} \,  #1 \right)}
\newcommand{\lcm}{\operatorname{lcm}}
\DeclareMathAlphabet{\mathpzc}{OT1}{pzc}{m}{it}
\def\lp{\left(}
\def\rp{\right)}
\def\a{\alpha}
\def\d{\delta}
\numberwithin{equation}{section}
\numberwithin{theorem}{section}
\title{Conjectures of Sun about sums of polygonal numbers}
\author{Kathrin Bringmann}
\address{Department of Mathematics and Computer Science\\Division of Mathematics\\University of Cologne\\ Weyertal 86-90 \\ 50931 Cologne \\Germany}
\email{kbringma@math.uni-koeln.de}
\author{Ben Kane}
\address{Department of Mathematics\\ University of Hong Kong\\ Pokfulam, Hong Kong}
\email{bkane@hku.hk}
\date{\today}
\thanks{The first author has received funding from the European Research Council (ERC) under the European Union's Horizon 2020 research and innovation programme (grant agreement No. 101001179). The research of the second author was supported by a grant from the Research Grants Council of the Hong Kong SAR, China (project number HKU 17303618). }
\begin{document}	
\begin{abstract}
In this paper, we show that certain sums of generalized $m$-gonal numbers represent every positive integer if and only if they represent every positive integer up to an explicit bound $C_m$, verifying a conjecture of Sun for sufficiently large positive integers.
\end{abstract}

	\maketitle

\section{Introduction and statement of results}\label{sec:intro}
 For $m\in\N_{\geq 3}$ and $\ell\in\Z$, let $p_m(\ell)$ be the \begin{it}$\ell$-th (generalized) $m$-gonal number\end{it}
\begin{equation*}
p_m(\ell):=\frac12 (m-2)\ell^2-\frac12 (m-4)\ell.
\end{equation*}
For $\ell\in\N$, these count the number of dots contained in a regular polygon with $m$ sides having $\ell$ dots on each side. 
For example, the special case $m=3$ corresponds to  triangular numbers, $m=4$ gives squares, and $m=5$ corresponds to pentagonal numbers. 
There are several conjectures related to sums of polygonal numbers. 
Specifically, for $\bm{\alpha}\in\N^d$,\footnote{We denote vectors like $\bm{\alpha}$ in bold and the $j$-th component of a vector $\bm{\alpha}$ we write as $\alpha_j$ throughout.} we are interested in the solvability of the Diophantine equation
\begin{equation}\label{eqn:genpolysum}
\sum_{1 \leq j \leq d} \alpha_j p_m(\ell_j)=n
\end{equation}
with $\ell_j\in \N_0$ or $\ell_j\in\Z$. We call such a sum \begin{it}universal\end{it} if it is solvable for every $n\in\N$. Fermat stated (his claimed proof was not found in his writings) that every positive integer is the sum of three triangular number, four squares, five pentagonal numbers, and in general at most $m$ $m$-gonal numbers. In other words, he claimed that the sum $\sum_{1\leq j\leq m} p_m(\ell_j)$ is universal. His claim for squares ($m=4$) was proven by Lagrange in 1770, the claim for triangular numbers ($m=3$) was shown by Gauss in 1796, and the full conjecture was proven by Cauchy in 1813. 
Going in another direction, Ramanujan fixed $m=4$ and conjectured a full list of choices of $\bm{\alpha}\in\N^4$ for which the resulting sum is universal; this conjecture was later proven by Dickson \cite{Dickson}. Following this, the classification of universal quadratic forms was a central area of study throughout the 20th century, culminating in the Conway--Schneeberger 15-theorem \cite{Bhargava,ConwaySchneeberger} and the 290-theorem \cite{BhargavaHanke}, which state that arbitrary quadratic forms whose cross terms are even (resp. are allowed to be odd) are universal if and only if they represent every integer up to $15$ (resp. $290$). Theorems of this type are now known as \begin{it}finiteness theorems\end{it}. Namely, given an infinite set $S\subseteq\N$, one determines a finite subset $S_0$ of $S$ such that a solution to \eqref{eqn:genpolysum} exists for every $n\in S$ if and only if it exists for every $n\in S_0$. Taking $S=\N$, one obtains a condition for universality of a given sum of polygonal numbers. For example, choosing $m=3$ or $m=6$, \eqref{eqn:genpolysum} is solvable with $\bm{\ell}\in\Z^d$ for all $n\in\N$ if and only if it is solvable for every $n\leq 8$ \cite{BosmaKane}, for $m=5$ it is solvable with $\bm{\ell}\in\Z^d$ for all $n\in\N$ if and only if it is solvable for every $n\leq 109$ \cite{Ju}, while it is solvable with $\bm{\ell}\in\N_0^d$ for all $n\in\N$ if and only if it is solvable for every $n\leq 63$ \cite{JuKim} and for $m=8$ it is solvable for $\bm{\ell}\in\Z^d$ for all $n\in\N$ if and only if it is solvable for every $n\leq 60$ \cite{JuOh}.

Here we consider the question of universality in the case $\bm{\alpha}=(1,2,4,8)$ as one varies $m$. Specifically, we have the following conjecture of  Sun  (see \cite[Conjecture 5.4]{SunLagrange}).
\begin{conjecture}\label{conj:Sun1248}
For $m\in\{7,9,10,11,12,13,14\}$ and $\bm{\ell}\in\Z^4$, the equation
\begin{equation}\label{eqn:polygonalrep}
p_m(\ell_1)+ 2p_m(\ell_2)+4p_m(\ell_3)+ 8p_m(\ell_4) = n
\end{equation}
is solvable for every $n\in\N$. 
\end{conjecture}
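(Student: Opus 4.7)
The plan is to reformulate Conjecture~\ref{conj:Sun1248} as a constrained representation problem for a quaternary diagonal quadratic form, apply weight-$2$ modular form theory to obtain an effective threshold $C_m$ beyond which representability is automatic, and complete the argument with a finite computer check. Completing the square via
\begin{equation*}
8(m-2)p_m(\ell) = \bigl(2(m-2)\ell - (m-4)\bigr)^2 - (m-4)^2
\end{equation*}
and setting $x_j := 2(m-2)\ell_j - (m-4)$ transforms \eqref{eqn:polygonalrep} into the statement that
\begin{equation*}
x_1^2 + 2x_2^2 + 4x_3^2 + 8x_4^2 = 8(m-2)n + 15(m-4)^2 =: N_{m,n}
\end{equation*}
admits a solution with $x_j \equiv -(m-4) \pmod{2(m-2)}$ for every $j$. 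Writing $r_m(N)$ for the number of such congruence-constrained representations of $N$ by the diagonal quaternary form $Q(\mathbf{x}) := x_1^2 + 2x_2^2 + 4x_3^2 + 8x_4^2$, the conjecture becomes the positivity of $r_m(N_{m,n})$ for every $n \in \N$.

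The generating function $\Theta_m(\tau) := \sum_{N\geq 0} r_m(N) q^N$ is a weight-$2$ holomorphic modular form on a congruence subgroup whose level divides a small power of $2(m-2)$, being the theta series of a coset of a finite-index sublattice of $\Z^4$ with respect to $Q$. We decompose $\Theta_m = E_m + f_m$ into its Eisenstein and cuspidal parts. Siegel's mass formula expresses the Fourier coefficients of $E_m$ as a product of local densities, yielding (in the absence of local obstructions) a lower bound of the form $a_{E_m}(N) \gg_m N^{1-\epsilon}$. On the cuspidal side, Deligne's bound combined with explicit estimates for the Petersson norms of a basis of the ambient weight-$2$ cusp form space gives $|a_{f_m}(N)| \ll N^{1/2+\epsilon}$ with an effectively computable implied constant. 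Since $N_{m,n}$ is linear in $n$, these two estimates combine to produce an explicit $C_m$ with $r_m(N_{m,n}) > 0$ for every $n \geq C_m$, provided local solvability of $N_{m,n}$ by $Q$ on the prescribed coset holds at every prime dividing $2(m-2)$. The proof is then completed by a finite search for representations of $n < C_m$, in which each $\ell_j$ ranges over an interval of length $O(\sqrt{n/m})$.

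The principal obstacle is controlling the cuspidal contribution sharply enough that $C_m$ lies within the range of a feasible finite check. This typically demands either an explicit decomposition of the relevant weight-$2$ cusp form space into Hecke eigenforms together with a direct computation of their Petersson norms, or a Rankin--Selberg argument yielding uniform bounds. A secondary complication is that $\Theta_m$ naturally sits in a space of modular forms with character modulo $2(m-2)$; to apply the classical Eisenstein/cusp splitting one first projects $\Theta_m$ onto isotypic components under the relevant character group and analyses each piece separately. Checking local solvability at each prime $p \mid 2(m-2)$ reduces to a finite verification modulo a bounded power of $p$, which for $m \in \{7,9,10,11,12,13,14\}$ presents no essential difficulty.
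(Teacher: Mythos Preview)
The paper does \emph{not} prove this statement; it is stated as a conjecture and remains open. What the paper proves is only the ``sufficiently large'' part (Theorem~\ref{thm:Sun1248sufflarge}), and there the explicit thresholds obtained are $C_m$ between roughly $10^{80}$ and $10^{89}$. Your outline for that part is essentially the same strategy as the paper's---complete the square to reduce to $s_{m,2(m-2),(1,2,4,8)}(N)>0$, split $\Theta$ as Eisenstein plus cusp form, bound each piece---with two technical differences: the paper identifies the Eisenstein component by matching growth at all cusps against sieved/$V$-operated $\widehat{E}_2$'s rather than via Siegel--Weil local densities, and on the cuspidal side it uses an explicit $n^{3/5}$ bound (Lemma~\ref{lem:BanerjeeKane}) together with an explicit Petersson-norm estimate (Lemma~\ref{lem:KKT}) rather than the sharper but non-explicit $n^{1/2+\varepsilon}$ you invoke.

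The genuine gap in your proposal is the final step. You write that ``the proof is then completed by a finite search for representations of $n < C_m$,'' but with $C_m\approx 10^{80}$ and $\ell_j$ ranging over intervals of length $O(\sqrt{n})$, this search is astronomically out of reach; the paper makes no attempt at it, and no known refinement of the cuspidal bound brings $C_m$ into computable range. A secondary point you gloss over is local solvability: for $m=12$ the class $n\equiv 4\pmod{16}$ is a genuine complication, and the paper handles it separately (Lemma~\ref{lem:p122power}) rather than by the blanket ``no essential difficulty'' you assert. So your sketch would recover Theorem~\ref{thm:Sun1248sufflarge} but not the conjecture itself.
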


\begin{remark}
	A proof of Conjecture \ref{conj:Sun1248} would give a classification of those $m$ for which the sum \eqref{eqn:genpolysum} is universal in the case $\bm{\alpha}=(1,2,4,8)$. By direct computation, one sees that $p_m(\ell)\in\{0,1\}$ or $p_m(\ell)\geq m-3$. Using this, one obtains that \eqref{eqn:polygonalrep} is not solvable for $n=16$ for every $m\geq 20$. For $m=16$, $m=17$, $m=18$, and $m=19$, one finds directly that there is no solution for $n=29$, $n=30$, $n=16$, and $n=17$, respectively. Moreover, for $m\in\{3,6\}$ it is known by work of Liouville \cite{Liouville} that the sum is universal, for $m=4$ it was conjectured by Ramanujan and proven by Dickson \cite{Dickson} that the sum is universal, while for $m=5$ (resp. $m=8$) it was shown by Sun in \cite{SunTernary} (resp. \cite{SunLagrange}) to be universal.
\end{remark}

In this paper, we prove that Conjecture \ref{conj:Sun1248} is true for $n$ sufficiently large.

\begin{theorem}\label{thm:Sun1248sufflarge}
	For $m\in\{7,9,10,11,12,13,14\}$, there exists an explicit constant $C_m$ (defined in Table \ref{tab:finalbounds}) such that \eqref{eqn:polygonalrep} is solvable with $\bm{\ell}\in\Z^4$ for every $n\in\N_{\geq C_m}$, with the restriction that $n\not\equiv 4\pmod{16}$ if $m=12$. 
\end{theorem}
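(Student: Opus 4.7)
The plan is to translate \eqref{eqn:polygonalrep} into a representation problem for a quaternary quadratic form, and then apply the classical dichotomy between an Eisenstein main term and a cuspidal error term for the associated theta series. Multiplying \eqref{eqn:polygonalrep} by $8(m-2)$ and using the identity
\[
8(m-2)\,p_m(\ell) = \bigl(2(m-2)\ell - (m-4)\bigr)^2 - (m-4)^2,
\]
the equation becomes $x_1^2 + 2x_2^2 + 4x_3^2 + 8x_4^2 = N_n$, where $N_n := 8(m-2)n + 15(m-4)^2$ and $x_j := 2(m-2)\ell_j - (m-4)$ ranges over the coset $-(m-4) + 2(m-2)\Z$. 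It therefore suffices to prove that for $n \geq C_m$ (with the noted exception at $m = 12$) the integer $N_n$ is represented by $Q(\bm{x}) := x_1^2 + 2x_2^2 + 4x_3^2 + 8x_4^2$ with $\bm{x}$ in this fixed coset of $\Z^4$.

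To this representation problem I attach the shifted theta series
\[
\Theta_m(\tau) = \sum_{\substack{\bm{x} \in \Z^4 \\ x_j \equiv -(m-4) \pmod{2(m-2)}}} e^{2\pi i Q(\bm{x})\tau},
\]
a weight-$2$ modular form on a congruence subgroup of suitable level whose $N_n$-th Fourier coefficient counts the representations sought. Decomposing $\Theta_m = E_m + f_m$ into its Eisenstein and cuspidal components, it suffices to establish $a_{E_m}(N_n) > |a_{f_m}(N_n)|$ for all $n \geq C_m$.

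The Eisenstein coefficient $a_{E_m}(N_n)$ is, up to a positive normalizing constant, the product of local densities $\prod_p \alpha_p(N_n)$ times $N_n$, and grows linearly in $n$ whenever all $\alpha_p(N_n)$ are positive. For good primes $p \nmid 2(m-2)$ positivity is automatic; for the bad primes---especially $p = 2$---one computes the coset-shifted local densities explicitly. In the case $m = 12$ a direct $2$-adic calculation shows that the main term degenerates precisely when $n \equiv 4 \pmod{16}$, exactly the residue class excluded in Theorem~\ref{thm:Sun1248sufflarge}; for every other case one obtains an explicit lower bound of the form $a_{E_m}(N_n) \geq c_m\, n$ with $c_m > 0$. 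The cuspidal coefficient is controlled by Deligne's theorem applied to the newform decomposition of $f_m$, combined with an explicit estimate of the Petersson norm $\|f_m\|$ and of the dimension of the ambient cusp space, yielding $|a_{f_m}(N_n)| \leq D_m\, n^{1/2}\sigma_0(n)$ with explicit $D_m$. Comparison of these two bounds produces the explicit constant $C_m$ of the theorem, beyond which $a_{E_m}(N_n) > |a_{f_m}(N_n)|$ and hence $N_n$ is represented.

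The main obstacle is making every estimate effective. The $2$-adic local density computation is delicate because the coset condition $x_j \equiv -(m-4) \pmod{2(m-2)}$ interacts nontrivially with the $2$-valuation of $Q$, and one must verify case by case that no analogue of the $m = 12$ obstruction arises for the remaining values $m \in \{7, 9, 10, 11, 13, 14\}$. The other technical bottleneck is the effective estimate of $\|f_m\|$, since the standard Deligne bound carries no explicit constant; this is typically extracted by computing $\langle \Theta_m, \Theta_m \rangle$ directly and subtracting off the explicitly known Eisenstein contribution.
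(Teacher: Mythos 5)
Your proposal follows the same overall strategy as the paper: complete the square to turn \eqref{eqn:polygonalrep} into the representation of $N_n=8(m-2)n+15(m-4)^2$ by $x_1^2+2x_2^2+4x_3^2+8x_4^2$ with all $x_j$ in a fixed coset modulo $2(m-2)$, split the associated congruence theta series into an Eisenstein part plus a cusp form, bound the Eisenstein coefficient below linearly in $n$ and the cuspidal coefficient above sublinearly with explicit constants, and compare to extract $C_m$. The genuine difference lies in how the Eisenstein term is made explicit: you invoke the local-density (Siegel--Weil) description and propose a delicate $2$-adic density computation for the shifted lattice, whereas the paper pins down $E_{r,M,\bm{\alpha}}$ exactly as an explicit combination of sieved and rescaled $E_2$'s by matching growth at all cusps (Lemma \ref{lem:Thetacuspgrowth}, Corollary \ref{cor:Thetacuspgrowtheval}, Lemma \ref{lem:ConstantE2SieveV}), yielding the closed divisor-sum formulas of Proposition \ref{prop:Eisenstein} and the lower bounds of Corollary \ref{cor:EisensteinLower} without any local-density machinery; your route buys conceptual transparency about which primes can obstruct, the paper's buys directly computable formulas. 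Two caveats on your write-up. First, for $m=12$ the main term does not vanish when $n\equiv 4\pmod{16}$; rather, in that class $\ord_2(N_n)$ is unbounded and the Eisenstein coefficient is comparable only to (roughly) the odd part of $N_n$, so it is the \emph{linear} lower bound that fails --- your ``degenerates precisely when'' should be read in this sense. Second, on the cuspidal side, Deligne's bound \eqref{eqn:Deligne} applies to newforms, and converting it into a fully explicit bound for an arbitrary element of the cusp space in terms of $\|f_m\|$ is exactly the Blomer/Duke-type input the paper imports (Lemma \ref{lem:BanerjeeKane}), at the cost of an exponent $n^{3/5}$ rather than your claimed $n^{1/2}\sigma_0(n)$ (either suffices for the comparison); moreover $\langle\Theta_m,\Theta_m\rangle$ diverges because $\Theta_m$ is not cuspidal, so $\|f_m\|$ must be bounded by estimating $\Theta_m-E_m$ itself (as in Lemma \ref{lem:KKT}), not by subtracting an Eisenstein contribution from a self inner product.
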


\begin{remark}
To prove Conjecture \ref{conj:Sun1248} for $m=12$, it suffices to show that \eqref{eqn:polygonalrep} holds for all $n\in\N$ with $n\not\equiv 4\pmod{16}$ (see Lemma \ref{lem:p122power}). Hence the restriction in Theorem \ref{thm:Sun1248sufflarge} is natural.
\end{remark}

By completing the square, one easily sees that representations of integers as sums of polygonal numbers are closely related to sums of squares with congruence conditions. In particular, setting 
\begin{align*}
r_{m,\bm{\alpha}}(n):&=\#\left\{\bm{\ell}\in\Z^4: \sum_{1\leq j\leq 4} \alpha_j p_{m}(\ell_j)=n\right\},
\\
s_{r,M,\bm{\alpha}}(n):&=\#\left\{ \bm{x}\in\Z^4: \sum_{1\leq j\leq 4} \alpha_j x_j^2=n,\ x_j\equiv r\pmod{M}\right\},
\end{align*}
we have  
\begin{equation}\label{eqn:r*s*rel}
r_{m,(1,2,4,8)}(n)= s_{m,2(m-2),(1,2,4,8)}\left(8(m-2)n + 15(m-4)^2\right).
\end{equation}
Hence since Conjecture \ref{conj:Sun1248} is equivalent to proving that $r_{m,(1,2,4,8)}(n)>0$ for every $n\in\N$ and $m\in \{7,9,10,11,$ $12,13,14\}$, the conjecture is equivalent to showing that for every $n\in\N$ we have
\begin{equation}\label{eqn:conjectureequivalent}
s_{m,2(m-2),(1,2,4,8)}\left(8(m-2)n + 15(m-4)^2\right)>0. 
\end{equation}

We investigate the numbers $s_{r,M,\bm{\alpha}}(n)$ by forming the generating function (setting $q:=e^{2\pi i \tau}$)
\[
\Theta_{r,M,\bm{\alpha}}(\tau):=\sum_{n\geq 0} s_{r,M,\bm{\alpha}}(n) q^{n}.
\]
 It is well-known that these functions are modular forms (see Lemma \ref{lem:ThetaModular} for the precise statement). By the theory of modular forms, there is a natural decomposition
\begin{equation}\label{eqn:thetadecompose}
\Theta_{r,M,\bm{\alpha}}=E_{r,M,\bm{\alpha}} + f_{r,M,\bm{\alpha}},
\end{equation}
where $E_{r,M,\bm{\alpha}}$ lies in the space spanned by Eisenstein series and $f_{r,M,\bm{\alpha}}$ is a cusp form. In order to prove Theorem \ref{thm:Sun1248sufflarge}, we obtain in the special case $r=m$, $M=2(m-2)$, and $\bm{\alpha}=(1,2,4,8)$ an explicit lower bound for the $n$-th Fourier coefficient $a_{r,M,\bm{\alpha}}(n)$ of $E_{r,M,\bm{\alpha}}$ in Corollary \ref{cor:EisensteinLower} and an explicit upper bound on the absolute value of the $n$-th Fourier coefficient $b_{r,M,\bm{\alpha}}(n)$ of $f_{r,M,\bm{\alpha}}$ in the proof of Theorem \ref{thm:Sun1248sufflarge}. 

The paper is organized as follows.  In Section \ref{sec:prelim}, we recall properties of the theta functions $\Theta_{r,M,\bm{\alpha}}$, the actions of certain operators on modular forms, the decomposition of modular forms into Eisenstein series and cusp forms, and evaluate certain Gauss sums. In Section \ref{sec:growth}, we investigate the growth of the theta functions towards all cusps and use this to compute the Eisenstein series component of the decomposition \eqref{eqn:thetadecompose}. The Fourier coefficients of the Eisenstein series components are then explicitly computed and lower bounds are obtained in Section \ref{sec:Eisenstein}. We complete the paper by obtaining upper bounds on the coefficients of the cuspidal part of the decomposition \eqref{eqn:thetadecompose} and prove Theorem \ref{thm:Sun1248sufflarge} in Section \ref{sec:mainproof}.

\section{Setup and preliminaries}\label{sec:prelim}

\subsection{Modularity of the generating functions}
In this subsection, we consider the modularity properties of the theta functions $\Theta_{r,M,\bm{\alpha}}$. To set notation, for $\Gamma_1(N)\subseteq\Gamma\subseteq\SL_2(\Z)$ ($N\in\N$) and a character $\chi$ modulo $N$, let $M_{k}(\Gamma,\chi)$ be the space of modular forms of weight $k$ with character $\chi$. In particular an element $f$ in this space satisfies, for $\gamma=\left(\begin{smallmatrix}a&b\\c&d\end{smallmatrix}\right)\in \Gamma$,
\[
f\big|_k\gamma(\tau):=(c\tau+d)^{-k}f(\gamma\tau)=\chi(d)f(\tau).
\]
Setting $\Gamma_{N,L}:=\Gamma_0(N)\cap\Gamma_1(L)$, by \cite[Theorem 2.4]{Cho}, we have the following.

\begin{lemma}\label{lem:ThetaModular}
For $\bm{\alpha}\in \N^4$, we have
\[
\Theta_{r,M,\bm{\alpha}} \in M_{2}\left(\Gamma_{4\operatorname{lcm}(\bm{\alpha})M^2,M}, \left(\frac{\prod_{j=1}^4 \alpha_j}{\cdot}\right)\right).
\] 
\end{lemma}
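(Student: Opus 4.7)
The plan is to reduce to the classical modularity of shifted one-variable theta series, exploiting that the quadratic form $\sum_j \alpha_j x_j^2$ is diagonal and the congruence $x_j \equiv r \pmod{M}$ is imposed separately on each coordinate. This matches exactly the hypotheses of \cite[Theorem 2.4]{Cho}, so the proof reduces to invoking that result; I sketch how its conclusion produces the level and character claimed.

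First I would factor
\[
\Theta_{r,M,\bm{\alpha}}(\tau) = \prod_{j=1}^{4} \theta_{r,M,\alpha_j}(\tau), \qquad \theta_{r,M,\alpha}(\tau) := \sum_{\substack{x \in \Z \\ x \equiv r \pmod{M}}} q^{\alpha x^2}.
\]
The substitution $x = r + My$ rewrites each factor as a shifted theta series of weight $1/2$, namely $\theta_{r,M,\alpha}(\tau) = q^{\alpha r^2}\sum_{y \in \Z} q^{\alpha M^2 y^2 + 2\alpha M r y}$. By Shimura's transformation formula, $\theta_{r,M,\alpha}$ is a modular form of weight $1/2$ on $\Gamma_0(4\alpha M^2) \cap \Gamma_1(M)$: the $4\alpha M^2$ is the standard level for a scalar theta of quadratic coefficient $\alpha M^2$, while the intersection with $\Gamma_1(M)$ is needed so that the residue class of $x$ modulo $M$ is preserved under the change of variable. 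The multiplier system on $\smallabcd$ is the Shimura cocycle $\eps_d^{-1}\left(\frac{\alpha}{d}\right)$, the $M^2$ inside the Kronecker symbol dropping out as a perfect square.

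Taking the product over $j=1,\dots,4$ gives weight $2$. The multipliers combine to give automorphy factor $\eps_d^{-4}\prod_j \left(\frac{\alpha_j}{d}\right) = \left(\frac{\prod_j \alpha_j}{d}\right)$ on any $d$ coprime to $2\lcm(\bm{\alpha})M$, using $\eps_d^{-4} = 1$. The intersection of the four level subgroups is contained in $\Gamma_0(4 \lcm(\bm{\alpha}) M^2) \cap \Gamma_1(M) = \Gamma_{4\lcm(\bm{\alpha})M^2,\,M}$, as claimed.

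The main technical obstacle in a self-contained proof is the careful bookkeeping of the Shimura cocycle: each half-integral weight factor carries an $\eps_d^{-1}$ and a Kronecker symbol, and verifying that four such cocycles multiply cleanly to give the integral-weight character $\left(\frac{\prod_j\alpha_j}{\cdot}\right)$ requires tracking the reciprocity signs uniformly across all cusps. Since this bookkeeping is carried out in exactly the generality we need in \cite[Theorem 2.4]{Cho}, we simply invoke that theorem.
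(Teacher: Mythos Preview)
Your proposal is correct and matches the paper's approach: the paper also proves this lemma simply by citing \cite[Theorem~2.4]{Cho}, with no further argument. Your additional sketch of the factorization into weight-$\tfrac12$ congruence theta series and the multiplication of Shimura cocycles is sound and supplies detail the paper omits.
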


\subsection{Operators on non-holomorphic modular forms}

For a translation-invariant function $f$ with Fourier expansion (denoting $\tau=u+iv\in\H$)
\[
f(\tau)=\sum_{n\ge0} c_{f,v}(n) q^n,
\]
we define the \textit{sieving operator} ($M$, $m\in\N$)
\begin{equation*}
f\big|S_{M,m}(\tau):=\sum_{\substack{n\ge0\\ n\equiv m\pmod{M}}} c_{f,v}(n) q^n.
\end{equation*}
As usual, we also define the \textit{$V$-operator} ($\d\in\N$) by
\[
f\big|V_{\delta}(\tau):=\sum_{n\geq 0} c_{f,\delta v}(n)q^{\delta n}.
\]
We require the modularity properties of (non-holomorphic) modular forms under the operators $S_{M,m}$ and $V_d$. 
Arguing via commutator relations for matrices, a standard argument (for example, see the proof of \cite[Lemma 2]{LiWinnie}), one obtains the following.
\begin{lemma}\label{lem:modularoperators}
Suppose that $k\in\Z$, $L,N\in\N$ with $L\mid N$, and $f$ satisfies weight $k$ modularity on $\Gamma_{N,L}$.
\begin{enumerate}[leftmargin=*,label={\rm(\arabic*)}]
\item
For $d\in \N$, the function $f|V_d$ satisfies weight $k$ modularity on $\Gamma_{\lcm(4,Nd),L}$.
\item
For $m\in\Z$ and $M\in\N$, the function $f|S_{M,m}$ satisfies weight $k$ modularity on $\Gamma_1(NM^2)$.
\end{enumerate}
\end{lemma}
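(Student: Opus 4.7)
The plan is to realize each operator as a linear combination of slash actions by explicit matrices and then use commutator relations to move any $\gamma$ in the proposed new level group through those matrices, producing a matrix $\gamma'$ in $\Gamma_{N,L}$. Once that is achieved, the assumed modularity of $f$ on $\Gamma_{N,L}$ transfers term-by-term to give the desired transformation law on the new group.

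For part (1), I would use the identification $f|V_d(\tau) = f(d\tau)$, which up to a scalar is $f|_k \sigma_d$ for $\sigma_d := \begin{pmatrix} d & 0 \\ 0 & 1 \end{pmatrix}$. Given $\gamma = \kabcd \in \Gamma_{\lcm(4,Nd),L}$, I would write $\sigma_d \gamma = \gamma'\sigma_d$ with
\[
\gamma' := \begin{pmatrix} a & db \\ c/d & d \end{pmatrix}.
\]
The hypothesis $Nd \mid c$ makes $\gamma'$ integral and puts its lower-left entry in $N\Z$, while $a,d \equiv 1 \pmod L$ places $\gamma' \in \Gamma_{N,L}$; the identity $\det(\gamma') = ad - bc = 1$ is immediate. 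Applying the assumed modularity of $f$ under $\gamma'$ then yields the claimed transformation of $f|V_d$ under $\gamma$. The factor of $4$ in $\lcm(4,Nd)$ plays no role in this slash computation but is included for compatibility with Kronecker-symbol characters such as those appearing in Lemma \ref{lem:ThetaModular}.

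For part (2), I would start from the standard averaging identity
\[
f|S_{M,m}(\tau) = \frac{1}{M}\sum_{j=0}^{M-1} e^{-2\pi i jm/M}\, f\!\left(\tau + \tfrac{j}{M}\right),
\]
which presents $f|S_{M,m}$ as a linear combination of $f|_k T_j$ for $T_j := \begin{pmatrix} 1 & j/M \\ 0 & 1 \end{pmatrix}$. For $\gamma = \kabcd \in \Gamma_1(NM^2)$, a direct calculation gives $T_j \gamma = \gamma'_j T_j$ with
\[
\gamma'_j := \begin{pmatrix} a + cj/M & b + (d-a)j/M - cj^2/M^2 \\ c & d - cj/M \end{pmatrix}.
\]
The congruences $NM^2 \mid c$ and $a,d \equiv 1 \pmod{NM^2}$ simultaneously force every entry of $\gamma'_j$ to be integral and place $\gamma'_j$ in $\Gamma_{N,L}$; its determinant reduces to $ad - bc = 1$. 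Transforming each term by the modularity of $f$ under $\gamma'_j$ and re-summing then recovers $f|S_{M,m}$ on the left.

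The only real obstacle is the bookkeeping of the congruences required to place $\gamma'$ (respectively each $\gamma'_j$) inside $\Gamma_{N,L}$; once these line up the remainder of the argument is automatic. This is exactly why the new level groups are narrowed to $\Gamma_{\lcm(4,Nd),L}$ and $\Gamma_1(NM^2)$ respectively, and it is the reason the paper points to the commutator-relation calculation in the proof of \cite[Lemma 2]{LiWinnie} rather than reproducing the routine verification.
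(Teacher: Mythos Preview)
Your proposal is correct and follows exactly the commutator-relation approach the paper indicates (citing \cite[Lemma 2]{LiWinnie}); the matrix identities $\sigma_d\gamma=\gamma'\sigma_d$ and $T_j\gamma=\gamma'_jT_j$ together with the congruence bookkeeping placing $\gamma',\gamma'_j\in\Gamma_{N,L}$ are precisely the ``standard argument'' alluded to. One cosmetic point: in part~(1) you use $d$ both for the $V$-operator index and for the lower-right entry of $\gamma$, so the displayed $\gamma'$ should be read with that clash in mind.
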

It is useful to determine the commutator relations between the $V$-operator and sieving.
\rm
\begin{lemma}\label{lem:VSieveCommute}
Let $m\in\Z$ and $M_1,M_2\in\N$ be given and set $d:=\gcd(M_1,M_2)$ and $\mu_j:=\frac{M_j}{d}$. Then for any translation-invariant function $f$ we have
\[
f\big|V_{M_1}\big|S_{M_2,m}=
\begin{cases}
f\big|S_{\mu_2,\overline{\mu}_1\frac{m}{d}}\big|V_{M_1}&\text{if }d\mid m,\\
0&\text{otherwise},
\end{cases}
\]
 where $\bar{\mu}_1$ is the inverse of $\mu_1 \pmod{\mu_2}$. 
\end{lemma}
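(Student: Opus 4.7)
The plan is to verify the identity directly on the level of Fourier expansions, exploiting the fact that both operators act transparently on the $q$-series of $f$. Write $f(\tau)=\sum_{n\ge 0} c_{f,v}(n)q^n$. From the definition, applying $V_{M_1}$ rescales $\tau\mapsto M_1\tau$, so the Fourier expansion of $f\big|V_{M_1}$ has the $N$-th coefficient equal to $c_{f,M_1 v}(N/M_1)$ when $M_1\mid N$ and $0$ otherwise.

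Next I would apply $S_{M_2,m}$ to this series. The surviving terms are those indexed by $N=M_1 n$ satisfying the congruence $M_1 n\equiv m\pmod{M_2}$. The elementary observation is that this linear congruence in $n$ has a solution if and only if $d=\gcd(M_1,M_2)$ divides $m$, and when it does, dividing through by $d$ and using $\gcd(\mu_1,\mu_2)=1$ reduces it to $n\equiv \bar{\mu}_1\,(m/d)\pmod{\mu_2}$. If $d\nmid m$ the operator kills every coefficient and one obtains $0$, which is the second case of the lemma.

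In the nontrivial case $d\mid m$, I would then expand the right-hand side: the sieve $S_{\mu_2,\bar{\mu}_1 m/d}$ picks out exactly those $n$ with $n\equiv \bar{\mu}_1(m/d)\pmod{\mu_2}$ in the original series $\sum_n c_{f,v}(n)q^n$, and the subsequent $V_{M_1}$ replaces $c_{f,v}(n)q^n$ with $c_{f,M_1 v}(n)q^{M_1 n}$. Comparing term by term with the $q$-expansion computed in the previous paragraph, the two series agree, which establishes the identity.

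There is essentially no analytic obstacle here; the proof is a bookkeeping exercise on Fourier coefficients. The only point requiring care is the non-holomorphic nature of $f$: the coefficients $c_{f,v}(n)$ depend on $v$, and one must keep track that $V_{M_1}$ shifts this dependence from $v$ to $M_1 v$ in a way compatible on both sides of the identity. Because the $V$-operator is applied last on the right-hand side and first on the left-hand side, and because sieving does not touch $v$, the two $v$-dependencies match correctly. This is the small subtlety to flag in the writeup.
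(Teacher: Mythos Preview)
Your argument is correct: the coefficient-by-coefficient comparison you outline, together with the elementary solvability criterion for the linear congruence $M_1 n\equiv m\pmod{M_2}$, establishes the identity, and you handle the $v$-dependence correctly. The paper in fact states this lemma without proof, so your write-up supplies exactly the routine verification the authors omit.
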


\subsection{Decomposition into Eisenstein series and cusp forms}\label{sec:decomposition}

Comparing Fourier coefficients on both sides of \eqref{eqn:thetadecompose}, we have 
\begin{equation}\label{eqn:sabdecompose}
s_{r,M,\bm{\alpha}}(n)=a_{r,M,\bm{\alpha}}(n)+b_{r,M,\bm{\alpha}}(n).
\end{equation}
Theorem \ref{thm:Sun1248sufflarge} is equivalent to showing \eqref{eqn:conjectureequivalent} for $n$ sufficiently large (with the restriction that $n\not\equiv 4\pmod{16}$ for $m=12$). Roughly speaking, the approach in this paper to proving \eqref{eqn:conjectureequivalent} is to prove that for $n$ sufficiently large with $n\equiv 15(m-4)^2\pmod{8(m-2)}$ (noting the congruence conditions in \eqref{eqn:conjectureequivalent})
\[
a_{r,M,\bm{\alpha}}(n)>|b_{r,M,\bm{\alpha}}(n)|.
\]

To obtain an upper bound for $|b_{r,M,\bm{\alpha}}(n)|$, we recall that Deligne \cite{Deligne} proved that for a normalized newform $f(\tau)=\sum_{n\geq 1} c_f(n)q^n$ of weight $k$ on $\Gamma_0(N)$ with Nebentypus character $\chi$ (normalized so that $c_f(1)=1$), we have 
\begin{equation}\label{eqn:Deligne}
|c_f(n)|\leq \sigma_0(n) n^{\frac{k-1}{2}},
\end{equation}
where $\sigma_k(n):=\sum_{d\mid n} d^k$. To obtain an explicit bound for $|c_f(n)|$ for arbitrary $f\in S_{k}(\Gamma_1(N))$, we combine \eqref{eqn:Deligne} with a trick implemented by Blomer \cite{Blomer} and Duke \cite{DukeTernary}.  For cusp forms $f,g\in S_{k}(\Gamma)$, we define the \begin{it}Petersson inner product\end{it} by
\[
\left<f,g\right>:=\frac{1}{[\SL_2(\Z):\Gamma]}\int_{\Gamma\backslash\H} f(\tau)\overline{g(\tau)} v^k \frac{dudv}{v^2}.
\]
Letting $\|f\|:=\sqrt{\left<f,f\right>}$ denote the {\it Petersson norm} of $f\in S_k(\Gamma)$, a bound for $|c_f(n)|$ in terms of $\|f\|$ may be obtained.  Specifically, suppose that $f$ is a cusp form $f$ of weight $k\in\N$ on  $\Gamma_{N,L}$ (with $L\mid N$) and character $\chi$ modulo $N$. Using Blomer's method from \cite{Blomer}, an explicit bound is obtained in \cite[Lemma 4.1]{BanerjeeKane} for $|c_f(n)|$ as a function of $N$, $L$, and the Petersson norm $\|f\|$. Denoting by $\varphi$ Euler's totient function, we recall a bound from the case $k=2$ below (see \cite[(4.4)]{BanerjeeKane}).
\begin{lemma}\label{lem:BanerjeeKane}
Suppose that $f\in S_{2}(\Gamma_{N,L},\chi)$ with $L\mid N$ and $\chi$ a character modulo $N$. Then we have the inequality 
\begin{equation*}
\left|c_{f}(n)\right|\leq 6.95\cdot 10^{18}\cdot \|f\| N^{1+2.5\cdot 10^{-6}}\prod_{p\mid N}\left(1+\frac{1}{p}\right)^{\frac{1}{2}}\varphi(L) n^{\frac{3}{5}}.
\end{equation*}
\end{lemma}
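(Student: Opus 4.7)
This bound is essentially a restatement of \cite[(4.4)]{BanerjeeKane}, so my plan is to follow Blomer's method from \cite{Blomer} together with the trick of \cite{DukeTernary}, tracking every constant carefully.

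Since $L \mid N$, one has $\Gamma_1(N) \subseteq \Gamma_{N,L}$, so $f$ lies in $S_2(\Gamma_1(N))$. I would first decompose $f = \sum_{\psi} f_\psi$ with $f_\psi \in S_2(\Gamma_0(N),\psi)$, where $\psi$ runs over the Dirichlet characters modulo $N$. The transformation law $f\big|_2 \gamma = \chi(d) f$ on $\Gamma_{N,L}$ forces every $\psi$ with $f_\psi \neq 0$ to agree with $\chi$ on the subgroup $\{d \in (\Z/N\Z)^\ast : d \equiv 1 \pmod L\}$, which has index $\varphi(L)$. Hence at most $\varphi(L)$ of the $f_\psi$ are nonzero, so it would suffice to prove the bound for a single $f_\psi$ without the factor $\varphi(L)$ and then sum, producing the factor $\varphi(L)$ in the statement.

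For a fixed $\psi$, I would invoke the Atkin--Lehner decomposition and write
\[
f_\psi = \sum_{M \mid N} \sum_{g \in \mathcal{N}(M,\psi_M)} \sum_{d \mid N/M} \alpha_{g,d}\, g\big|V_d,
\]
where $\mathcal{N}(M,\psi_M)$ denotes the normalized newforms of level $M$ and induced character. Taking $n$-th Fourier coefficients gives
\[
c_{f_\psi}(n) = \sum_{M \mid N} \sum_{g} \sum_{\substack{d \mid N/M \\ d \mid n}} \alpha_{g,d}\, c_g(n/d).
\]
Deligne's bound \eqref{eqn:Deligne} provides $|c_g(m)| \leq \sigma_0(m) m^{1/2}$ for the newform coefficients. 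The coefficients $\alpha_{g,d}$ would then be recovered from the Petersson pairings $\langle f_\psi, g|V_e\rangle$ by inverting the Gram matrix $\bigl(\langle g|V_d, g|V_e\rangle\bigr)_{d,e \mid N/M}$; since $g$ is a Hecke eigenform away from $M$, each of these inner products is an explicit Dirichlet series supported on divisors of $N/M$, whose inverse yields $|\alpha_{g,d}| \leq \|f\|$ times a factor that is multiplicative in $N/M$. Applying Cauchy--Schwarz over $M$, $g$, and $d$, and absorbing explicit uniform bounds for $\sigma_0(m)$ then produces the shape $\|f\|\, N^{1+\epsilon_1}\, n^{1/2+\epsilon_2}$, with $\sigma_0(n) n^{1/2}$ absorbed into $n^{3/5}$ (i.e.\ $1/2 + 1/10 = 3/5$) and $\sigma_0(N)$ absorbed into $N^{2.5 \cdot 10^{-6}}$. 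The factor $\prod_{p \mid N}(1 + 1/p)^{1/2}$ reflects $[\SL_2(\Z) : \Gamma_0(N)]^{1/2}$ entering through the normalization of the Petersson inner product.

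The hard part will be the fully effective tracking of constants: one needs quantitative control on the inverse Gram matrix, expressed as an explicit multiplicative function of $N/M$, together with sharp uniform (not merely asymptotic) bounds on divisor functions valid for all $m$. These inputs force the mild exponent losses $n^{3/5}$ and $N^{1+2.5\cdot 10^{-6}}$ and, when all contributions are combined, yield the explicit constant $6.95\cdot 10^{18}$.
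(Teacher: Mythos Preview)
The paper does not provide its own proof of this lemma; it is stated as a direct citation of \cite[(4.4)]{BanerjeeKane}, with the surrounding text merely indicating that the bound is obtained there via Blomer's method \cite{Blomer} combined with the trick of Duke \cite{DukeTernary}. Your proposal correctly identifies this in the very first sentence, and the sketch you then give of the underlying argument (character decomposition over $(\Z/N\Z)^*$ producing at most $\varphi(L)$ nonzero components, newform expansion, Deligne's bound, and explicit inversion of the Gram matrix of the $g|V_d$) is an accurate outline of how the cited result is proven. So your approach agrees with the paper's, which is simply to quote the result.
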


By Lemma \ref{lem:BanerjeeKane}, in order to obtain an explicit bound for $|b_{r,M,\bm{\alpha}}(n)|$, it remains to estimate $\|f_{r,M,\bm{\alpha}}\|$, where $f_{r,M,\bm{\alpha}}$ is the cusp form appearing in the decomposition in \eqref{eqn:thetadecompose}.
An explicit bound for $\|f_{r,M,\bm{\alpha}}\|$ was obtained in \cite[Lemma 3.2]{KKT}. To state the result, let $\bm{\alpha}\in\Z^{\ell}$. For the quadratic form $Q=Q_{\bm{\alpha}}$ given by 
\[
Q_{\bm{\alpha}}(\bm{x}):=\sum_{j=1}^{\ell} \alpha_j x_j^2,
\]
we define the \textit{level} and the \textit{discriminant} of $Q_{\bm \alpha}$ as
\[
N_{\bm{\alpha}}=4\lcm(\bm{\alpha}),\qquad D_{\bm{\alpha}}=2^{\ell}\prod_{j=1}^{\ell} \alpha_j. 
\]
\begin{lemma}\label{lem:KKT}
Let $\ell\geq 4$ be even, $\bm{\alpha}\in\N^{\ell}$, $r\in\Z$, and $M\in\N$. Then
\begin{multline*}
\|f_{r,M,\bm{\alpha}}\|^2\leq \frac{3^{2\ell-2}\left(\frac{\ell}{2}-2\right)!}{2^{\frac{\ell}{2}-3}\pi^{\ell}} \frac{M^{2\ell-4}N_{\bm{\alpha}}^{\ell-2}}{\prod_{p\mid M^2N_{\bm{\alpha}}}\left(1-p^{-2}\right)}\sum_{\delta\mid M^2N_{\bm{\alpha}}}\varphi\left(\frac{M^2N_{\bm{\alpha}}}{\delta}\right)\varphi(\delta)\frac{M^2N_{\bm{\alpha}}}{\delta}\left(\frac{\gcd(M^2,\delta)}{M^2}\right)^{\ell} \\
\times \sum_{m=0}^{\frac{\ell}{2}-2} \frac{(2\pi)^{-m}}{\left(\frac{\ell}{2}-2-m\right)!}(\ell-m-2)! \left(\frac{9}{ D_{\bm{\alpha}}}(\ell-m-1)\frac{M^2N_{\bm{\alpha}}}{\pi} +\ell^2 \right).
\end{multline*}
\end{lemma}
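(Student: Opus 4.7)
My plan is to start from the decomposition $\Theta_{r,M,\bm{\alpha}}=E_{r,M,\bm{\alpha}}+f_{r,M,\bm{\alpha}}$ from Section~\ref{sec:decomposition} and attack $\|f_{r,M,\bm{\alpha}}\|^2$ via the Rankin--Selberg method. Concretely, I would consider the integral $\int_{\Gamma_{M^2N_{\bm{\alpha}},M}\backslash\H}|\Theta_{r,M,\bm{\alpha}}(\tau)|^2 E(\tau,s) v^{\ell/2}\,\frac{du\,dv}{v^2}$ against the real-analytic Eisenstein series $E(\tau,s)$ on $\Gamma_{M^2N_{\bm{\alpha}},M}$, unfold using the lattice-theta description $\Theta_{r,M,\bm{\alpha}}(\tau)=\sum_{\bm{x}\in r+M\Z^{\ell}}q^{Q_{\bm{\alpha}}(\bm{x})}$, and take the residue at $s=1$. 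Subtracting the analogous contribution from $E_{r,M,\bm{\alpha}}$ then produces $\|f\|^2$ up to an explicit positive multiple arising from the volume of the fundamental domain.

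To extract an upper bound of the claimed shape I would proceed in three steps. First, parameterize the cusps of $\Gamma_{M^2N_{\bm{\alpha}},M}$ by divisors $\delta\mid M^2N_{\bm{\alpha}}$ with cusp width $M^2N_{\bm{\alpha}}/\delta$; each class contains $\varphi(\delta)\varphi(M^2N_{\bm{\alpha}}/\delta)$ cusps, and together with the normalization $1/[\SL_2(\Z):\Gamma_{M^2N_{\bm{\alpha}},M}]$ built into $\langle\cdot,\cdot\rangle$ this accounts for the prefactor $M^{2\ell-4}N_{\bm{\alpha}}^{\ell-2}/\prod_{p\mid M^2N_{\bm{\alpha}}}(1-p^{-2})$. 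Second, for each cusp $\mathfrak{a}=\gamma^{-1}\infty$, compute the constant term of $\Theta_{r,M,\bm{\alpha}}\big|_{\ell/2}\gamma^{-1}$ by Poisson summation on the shifted lattice $r+M\Z^{\ell}$; the resulting Gauss sums are bounded by $(\gcd(M^2,\delta)/M^2)^{\ell/2}$ times the dual-lattice volume factor $D_{\bm{\alpha}}^{-1/2}$, and squaring produces both the weight $(\gcd(M^2,\delta)/M^2)^{\ell}$ in the outer sum and the $9/D_{\bm{\alpha}}$ (up to numerical constants) in the inner parenthesis. Third, carry out the Mellin transform in $v$ term by term: integrating $v^{s+\ell/2-2}$ against a holomorphic Fourier expansion and expanding $\Gamma(s+\ell/2-m-2)$ around $s=1$ produces the inner sum $\sum_{m=0}^{\ell/2-2}(2\pi)^{-m}(\ell-m-2)!/(\ell/2-m-2)!$, while the additional $\ell^2$ inside the final parenthesis comes from the boundary contribution of the Eisenstein residue at $s=1$.

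The main obstacle is the second step: the uniform explicit evaluation of the Gauss sums attached to the congruence $\bm{x}\equiv r\pmod M$ after conjugation by an arbitrary $\gamma\in\SL_2(\Z)$. These depend delicately on $\gcd(\delta,M^2)$ through the index of the sublattice $(r+M\Z^{\ell})\cap\gamma(\Z^{\ell})$, and packaging the estimates into the clean factor $(\gcd(M^2,\delta)/M^2)^{\ell}$ requires a local case analysis at each prime $p\mid M^2N_{\bm{\alpha}}$ based on the separate $p$-adic valuations of $M$ and $\delta$. Once these are in hand, the remaining constants $3^{2\ell-2}$, $2^{\ell/2-3}$, and $\pi^{-\ell}$ track mechanically through the normalization of $E(\tau,s)$, the covolume of the lattice underlying $Q_{\bm{\alpha}}$, and the Gamma-function values at the residue, and the inequality follows by trivial estimates on the resulting divisor sums.
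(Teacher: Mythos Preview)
The paper does not contain a proof of this lemma at all: it is quoted verbatim as \cite[Lemma 3.2]{KKT} (Kamaraj--Kane--Tomiyasu, \emph{in preparation}), and the surrounding text simply says ``An explicit bound for $\|f_{r,M,\bm{\alpha}}\|$ was obtained in \cite[Lemma 3.2]{KKT}.'' So there is no in-paper argument to compare your proposal against.

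As for the proposal itself, the broad strategy---bound $\|f_{r,M,\bm{\alpha}}\|^2$ by an integral of $|\Theta_{r,M,\bm{\alpha}}|^2$ over a fundamental domain, organize the contributions cusp by cusp via the divisor parametrization $\delta\mid M^2N_{\bm{\alpha}}$, and control the growth of $\Theta_{r,M,\bm{\alpha}}$ at each cusp through the Gauss sums arising from Poisson summation on the shifted lattice---is the natural one and is certainly what underlies the stated bound. The specific framing you give, however, is not quite right: Rankin--Selberg against a real-analytic Eisenstein series with a residue at $s=1$ produces $\sum_n |c(n)|^2 n^{1-\ell/2}$ (a Dirichlet-series value), not $\|f\|^2$ directly, and your explanation of the inner sum $\sum_{m=0}^{\ell/2-2}$ as coming from ``expanding $\Gamma(s+\ell/2-m-2)$ around $s=1$'' is not coherent---that sum has the shape of an incomplete-Gamma expansion $\Gamma(k-1,x)=\sum_{m}(\cdots)$ that appears when one truncates the $v$-integral near a cusp and integrates $v^{\ell/2-2}e^{-cv}$ over a finite range, which points instead to a direct estimation of $\int_{\mathcal{F}}|f|^2 v^{\ell/2}\frac{du\,dv}{v^2}$ by covering $\mathcal{F}$ with Siegel neighborhoods of the cusps. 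So the skeleton is right, but the mechanism you describe for producing the $m$-sum and the $(\ell-m-1)$ factor would need to be reworked before this becomes an actual proof.
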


\subsection{Gauss sums}

Define the \begin{it}generalized quadratic Gauss sum\end{it} ($a,b\in\Z$, $c\in\N$)
\begin{equation*}
G(a,b;c):=\sum_{\ell\pmod{c}}e^{\frac{2\pi i}{c}\left(a\ell^2+b\ell\right)}.
\end{equation*}
Background information and many properties of these sums may be found in \cite{BerndtEvansWilliams}. 
To state the properties that we require, for $d$ odd, we define
\[
\varepsilon_d:=\begin{cases} 1&\text{if $d\equiv 1\pmod{4}$,}\\ i&\text{if $d\equiv 3\pmod{4}$,}\end{cases} 
\]
and we write $[a]_{b}$ for the inverse of $a$ modulo $b$ if $\gcd(a,b)=1$. 

\begin{lemma}\label{lem:GaussSums}
For $a,b\in\Z$ and $c,d\in\N$, the following hold. 
 \noindent

\noindent
\begin{enumerate}[leftmargin=*,label={\rm(\arabic*)}]
\item If $\gcd(a,c)\nmid b$, then $G(a,b;c)=0$, while if $\gcd(a,c)\mid b$ then 
\begin{equation*}\label{eqn:Gabcgcd}
G(a,b;c)=\gcd(a,c) G\left(\frac{a}{\gcd(a,c)},\frac{b}{\gcd(a,c)};\frac{c}{\gcd(a,c)}\right).
\end{equation*}
\item If $\gcd(a,c)=1$ and $c$ is odd, then 
\begin{equation*}\label{evaluateG}
G(a,b;c) = \varepsilon_c \sqrt{c} \left(\frac ac\right) e^{-\frac{2\pi i [4a]_c b^2}{c}}.
\end{equation*}
\item
If $\gcd(c,d)=1$, then 
\[
G(a,b;cd)=G(ac,b;d)G(ad,b;c).
\]
\item If $\gcd(a,c)=1$, $4\mid c$, and $b$ is odd, then $G(a,b;c)=0$.
\item If $a$ is odd, $b$ is even, and $r\in\N_{\geq 2}$, then  
\[
G\left(a,b;2^r\right)= 2^{\frac{r}{2}} (1+i) \left(\frac{-2^{r}}{a}\right) \varepsilon_a e^{\frac{2\pi i}{2^{r}} \left(-[a]_{2^{r}} \frac{b^2}{4}\right)}.
\]
\end{enumerate}
\end{lemma}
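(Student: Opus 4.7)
The plan is to establish each of the five parts by direct manipulation of the defining sum, combining (a) linear changes of the summation variable $\ell$ to either force a vanishing or to complete the square, (b) the Chinese Remainder Theorem for part (3), and (c) the classical evaluation of the purely quadratic Gauss sum $G(a,0;c)$ as input for parts (2) and (5).

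For (1), set $g:=\gcd(a,c)$ and consider the substitution $\ell \mapsto \ell + c/g$. Writing $a=ga'$ and $c=gc'$, one verifies that $a\ell^2$ is invariant modulo $c$ under this shift (since both $2a(c/g)\ell=2a'c\ell$ and $a(c/g)^2=a'c'c$ lie in $c\Z$), whereas $b\ell$ gains a contribution of $b(c/g)/c = b/g$ in the exponential. If $g\nmid b$, the shift multiplies the whole sum by a nontrivial root of unity and hence forces $G(a,b;c)=0$; if $g\mid b$, grouping $\ell$ into cosets of the subgroup generated by $c/g$ inside $\Z/c\Z$ produces $g$ identical blocks, each summing to $G(a/g, b/g; c/g)$.

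For (2), the hypothesis that $c$ is odd makes $2a$ invertible modulo $c$, so completing the square gives $a\ell^2 + b\ell \equiv a(\ell + [2a]_c b)^2 - [4a]_c b^2 \pmod{c}$; substituting reduces the problem to $G(a,0;c) = \varepsilon_c \sqrt{c}\left(\frac{a}{c}\right)$, the classical quadratic Gauss sum. Part (3) is a direct CRT computation: parametrize $\ell\pmod{cd}$ as $\ell = cu+dv$ with $u\pmod{d}$, $v\pmod{c}$, expand $a\ell^2+b\ell$, and observe that the cross term $2acduv$ lies in $cd\Z$, so the exponential factors as a product of two smaller Gauss sums. Part (4) is another shift: with $4\mid c$, the substitution $\ell \mapsto \ell+c/2$ preserves the quadratic part modulo $c$ (using $ac^2/4 \in c\Z$) and multiplies the linear contribution by $e^{\pi ib} = -1$ when $b$ is odd, forcing $G(a,b;c) = -G(a,b;c) = 0$.

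The main work lies in part (5). Although $2a$ is not invertible modulo $2^r$, the hypothesis that $b$ is even supplies the missing factor of $2$: set $y := [a]_{2^r}(b/2) \in \Z$, then a short computation gives $2ay \equiv b \pmod{2^{r+1}}$ and $ay^2 \equiv [a]_{2^r} b^2/4 \pmod{2^r}$, the latter using $a[a]_{2^r}^2 \equiv [a]_{2^r} \pmod{2^r}$. Hence $a(\ell+y)^2 \equiv a\ell^2 + b\ell + [a]_{2^r} b^2/4 \pmod{2^r}$, reducing the evaluation to $G(a,0;2^r)$. The value $G(a,0;2^r) = 2^{r/2}(1+i)\left(\frac{-2^r}{a}\right)\varepsilon_a$ is the classical $2$-power quadratic Gauss sum, which may be proved by splitting the sum according to the parity of $\ell$ to relate $G(a,0;2^r)$ to $G(a,0;2^{r-2})$ and iterating down to $r=2$ or $r=3$, or taken directly from \cite{BerndtEvansWilliams}. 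The main obstacle will be the careful tracking of $2$-adic valuations throughout the completion of the square, ensuring that the gain of one factor of $2$ from $b/2$ exactly compensates for the loss in inverting $2a$ modulo $2^r$, so that the identity remains valid modulo $2^r$ and not merely modulo $2^{r-1}$.
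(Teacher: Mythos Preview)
Your argument is correct. Each of the five parts is handled by the standard device: a shift of the summation variable for (1) and (4), completion of the square plus the classical quadratic Gauss sum for (2) and (5), and the Chinese Remainder parametrization for (3). The $2$-adic bookkeeping in (5) is done carefully enough; your observation that $a[a]_{2^r}^2\equiv [a]_{2^r}\pmod{2^r}$ together with $b^2/4\in\Z$ is exactly what makes the completed-square identity hold modulo $2^r$ rather than only modulo $2^{r-1}$.

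The paper itself does not prove this lemma. It is stated as a collection of standard facts, with the sentence preceding it pointing to \cite{BerndtEvansWilliams} for background and properties of these sums. So your write-up is more than the paper provides: you supply a self-contained derivation where the paper simply quotes the literature. The only part where you still lean on an outside input is the evaluation of the purely quadratic sums $G(a,0;c)$ for odd $c$ and $G(a,0;2^r)$ for $r\ge 2$, which is entirely reasonable and is the same reference the paper invokes.
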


We require an explicit evaluation of certain Gauss sums that naturally occur in the study of theta functions (see Lemma \ref{lem:Thetacuspgrowth} below). Throughout the paper, for $k,M\in\N$, and $r\in\Z$ with $\ord_2(r)\leq \ord_2(M)$, we write $M=2^{\mu}M_0$, $r=2^{\varrho}r_0$ (with $\varrho\leq \mu$), and $k=2^{\kappa}k_0$ with $M_0$, $r_0$, and $k_0$ odd. We furthermore set $g_0:=\gcd(M_0,k_0)$ and $g_1:=\gcd(g_0,\frac{k_0}{g_0})$.

\begin{lemma}\label{lem:G(a,b,c)eval}
Suppose that $h\in\Z$, $k\in\N$ with $\gcd(h,k)=1$, $\ell\in\N_0$,  $r\in\Z$, $M\in\N$ with $\gcd(M,r)\in\{1,2,4\}$, and $\varrho\leq \mu$.
\begin{enumerate}[leftmargin=*, label=\rm(\arabic*)]
\item If $g_1\neq 1$ or $\varrho < \min(\mu,\kappa-\ell -\mu)-1$, then
\[
G\left( 2^{\ell}h M^2,2^{\ell+1}hrM;k\right)=0.
\]

\item Suppose that $g_1=1$ and $\varrho \geq \min(\mu, \kappa-\ell-\mu)-1$. Setting $\delta:=\min(\ell+2\varrho,\kappa)$, we then have
\begin{multline*}
\quad e^{\frac{2\pi i 2^{\ell}hr^2}{k}}G\left(2^{\ell} h M^2,2^{\ell+1}hrM;k\right)\\
=\sqrt{k g_0}\begin{cases}
2^{\frac{\kappa}{2}}  \varepsilon_{\frac{k_0}{g_0}}   \left(\frac{2^{\ell+\kappa} h g_0}{\frac{k_0}{g_0}}\right) e^{\frac{2\pi i hr_0^2}{2^{\kappa-\delta}g_0}  2^{\ell+2\varrho-\delta} \left[\frac{k_0}{g_0}\right]_{2^{\kappa-\delta}g_0}}&\begin{array}{l}\hspace{-.2cm}\text{if }\kappa\leq \ell+2\mu\\ \quad\text{or }\kappa=\ell+2\mu+1\text{ and }\varrho=\mu-1,\end{array}\\
2^{\frac{\ell+2\mu}{2}}(1+i)  \varepsilon_{hg_0} \left(\frac{-2^{\ell+\kappa}\frac{k_0}{g_0}}{hg_0}\right)e^{\frac{2\pi i hr_0^2}{g_0}\left[2^{\kappa-\ell-2\mu}\frac{k_0}{g_0}\right]_{g_0}}&\text{if }\kappa \geq \ell+2\mu+2\text{ and }\varrho=\mu,\\
0&\text{otherwise.}
\end{cases}
\end{multline*}
\end{enumerate}
\end{lemma}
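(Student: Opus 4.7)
The plan is to split $G(2^\ell hM^2,2^{\ell+1}hrM;k)$ via the multiplicativity property, Lemma \ref{lem:GaussSums}(3), into its odd-part factor modulo $k_0$ and its $2$-part factor modulo $2^\kappa$, and then evaluate each piece separately using the remaining parts of Lemma \ref{lem:GaussSums}:
\begin{equation*}
G\!\left(2^\ell hM^2,2^{\ell+1}hrM;k\right)=G\!\left(2^{\ell+\kappa}hM^2,2^{\ell+1}hrM;k_0\right)\cdot G\!\left(2^\ell hM^2k_0,2^{\ell+1}hrM;2^\kappa\right).
\end{equation*}

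For part (1), I would reduce the odd factor to odd prime powers dividing $k_0$ via CRT and note that (since $\gcd(M,r)\in\{1,2,4\}$ forces $p\nmid r$ whenever $p\mid M_0$) each local sum has $v_p(a)=2\beta$ and $v_p(b)=\beta$ with $\beta:=v_p(M_0)$. If $g_1\neq 1$, some such $p$ satisfies $1\leq\beta<v_p(k_0)=:\alpha$; splitting on whether $2\beta\geq\alpha$ and applying Lemma \ref{lem:GaussSums}(1) then shows that $\gcd(a,p^{\alpha})$ does not divide $b$ at this prime, forcing the entire product to vanish. For the $2$-factor, one computes $v_2(ak_0)=\ell+2\mu$ and $v_2(b)=\ell+1+\mu+\varrho$: either $\ell+2\mu\geq\kappa$ (so $a\equiv 0\pmod{2^\kappa}$ and the trivial character sum vanishes unless $\varrho\geq\kappa-\ell-\mu-1$) or $\ell+2\mu<\kappa$ (so $\gcd(a,2^\kappa)=2^{\ell+2\mu}$ must divide $b$, i.e., $\varrho\geq\mu-1$); the hypothesis $\varrho<\min(\mu,\kappa-\ell-\mu)-1$ violates both branches.

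For part (2), assume $g_1=1$ and write $M_0=g_0M_0'$, $k_0=g_0k_0'$ with $\gcd(M_0',k_0')=1$. Pulling out $\gcd(\cdot,k_0)=g_0$ from the odd factor via Lemma \ref{lem:GaussSums}(1) and then invoking Lemma \ref{lem:GaussSums}(2) (whose coprimality hypothesis is guaranteed by $g_1=1$) gives
\begin{equation*}
G\!\left(2^{\ell+\kappa}hM^2,b;k_0\right)=g_0\,\varepsilon_{k_0'}\sqrt{k_0'}\left(\tfrac{2^{\ell+\kappa}hg_0}{k_0'}\right)e^{-\frac{2\pi i[4A]_{k_0'}B^2}{k_0'}},
\end{equation*}
where $g_0\sqrt{k_0'}=\sqrt{g_0k_0}$ and the exponent is simplified through $[4A]_{k_0'}B^2\equiv 2^\ell hr^2[2^\kappa g_0]_{k_0'}\pmod{k_0'}$. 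The three surviving $2$-factor sub-cases arise by comparing $\ell+2\mu$ with $\kappa$: when $\kappa\leq\ell+2\mu$ the $a$-argument is $\equiv 0\pmod{2^\kappa}$ and the sum collapses to $2^\kappa$; the boundary $\kappa=\ell+2\mu+1$, $\varrho=\mu-1$ is handled by first stripping $2^{\ell+2\mu}$ via Lemma \ref{lem:GaussSums}(1), after which a direct length-$2$ sum again equals $2^\kappa$; when $\kappa\geq\ell+2\mu+2$ and $\varrho=\mu$, Lemma \ref{lem:GaussSums}(5) applies after pulling out $2^{\ell+2\mu}$ (leaving odd $a$ and even $b$ in a modulus $2^{\kappa-\ell-2\mu}\geq 4$) and yields the $(1+i)\varepsilon_{hg_0}(\cdots)$ factor; all remaining configurations reduce either to Lemma \ref{lem:GaussSums}(4) (odd $b$ in a $4\mid c$ modulus) or to a cancelling length-$2$ sum, and hence vanish. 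Combining both factors and absorbing the prefactor via the CRT identity
\begin{equation*}
\frac{2^\ell hr^2}{k}\equiv\frac{2^\ell hr^2[2^\kappa]_{k_0}}{k_0}+\frac{2^\ell hr^2[k_0]_{2^\kappa}}{2^\kappa}\pmod{1},
\end{equation*}
together with the substitution $r=2^\varrho r_0$, then collapses the two exponentials into the single-denominator exponent with $\delta=\min(\ell+2\varrho,\kappa)$ stated in the lemma.

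The main obstacle is the $2$-adic bookkeeping, particularly the borderline case $\kappa=\ell+2\mu+1$, $\varrho=\mu-1$, which requires an extra preliminary reduction via Lemma \ref{lem:GaussSums}(1) before the $2$-part Gauss sum collapses to a pure power of $2$; and verifying that the Jacobi-symbol and $\varepsilon$-phase contributions from the odd and $2$-parts combine via quadratic reciprocity into the clean factors $\varepsilon_{k_0/g_0}\left(\tfrac{2^{\ell+\kappa}hg_0}{k_0/g_0}\right)$ and $\varepsilon_{hg_0}\left(\tfrac{-2^{\ell+\kappa}k_0/g_0}{hg_0}\right)$ across the two non-vanishing branches. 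A secondary challenge is matching the CRT-split exponents into the single clean denominator $2^{\kappa-\delta}g_0$ advertised in the statement.
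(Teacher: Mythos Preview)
Your proposal is correct and follows essentially the same approach as the paper: both arguments split the sum into its odd-modulus and $2$-power factors, strip out the common divisor $g_0$ (resp.\ $2^{\ell+2\mu}$) via Lemma~\ref{lem:GaussSums}(1), evaluate the pieces with Lemma~\ref{lem:GaussSums}(2),(4),(5), and recombine the exponentials with the Chinese Remainder Theorem. The only cosmetic difference is that the paper factors out the full $\gcd(a,c)=2^{\gamma}g_0$ before invoking the multiplicativity of Lemma~\ref{lem:GaussSums}(3), whereas you split first and pull out the gcd locally in each factor; the case analysis for the $2$-part and the CRT simplification of the phase are otherwise identical.
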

\begin{proof}
We evaluate $G(a,b;c)$ for $a:= 2^{\ell} h M^2$, $b:=2^{\ell+1}hrM$, and $c:=k$. By Lemma \ref{lem:GaussSums} (1), $G(a,b;c)=0$ unless $\gcd(a,c)\mid b$. Hence we first compute, using the fact that $\gcd(h,k)=1$, $\gcd(\frac{M_0}{g_0},\frac{k_0}{g_0})=1$, and $\frac{k_0}{g_0}$ is odd, 
\begin{equation}\label{eqn:gcd}
\gcd(a,c) =  2^{\min(\ell+2\mu,\kappa)}g_0g_2,
\end{equation}
where $g_2:=\gcd(M_0,\frac{k_0}{g_0})$.

\noindent
(1) A direct calculation gives that $\gcd(a,c)\mid b$ if and only if $g_1=1$ and $\varrho \geq \min(\mu,\kappa-\ell-\mu)-1$, which implies the claim by Lemma \ref{lem:GaussSums} (1).

\noindent(2) Set $\gamma:=\min(\ell+2\mu,\kappa)$. Note that  $\gamma\leq \ell+\mu+\varrho+1$. From the calculation yielding (1), we see that $g_1=1$ implies $g_2=1$. Plugging $g_1=g_2=1$ into \eqref{eqn:gcd} yields $\gcd(a,c)=2^{\gamma}g_0$ and it is not hard to see that $\gcd(a,c)\mid b$.  Therefore Lemma \ref{lem:GaussSums} (1),(3) implies that 
\begin{multline*}
 G(a,b;c)=2^{\gamma} g_0G\left(2^{\ell+2\mu+\kappa-2\gamma} hM_0 \frac{M_0}{g_0}, 2^{\ell+\mu+\varrho+1-\gamma} hr_0\frac{M_0}{g_0}; \frac{k_0}{g_0}\right)\\
\times G\left(2^{\ell+2\mu-\gamma} h M_0 \frac{M_0}{g_0}\frac{k_0}{g_0}, 2^{\ell+\mu+\varrho+1-\gamma} hr_0\frac{M_0}{g_0}; 2^{\kappa-\gamma}\right).
\end{multline*}
 Since $\frac{k_0}{g_0}$ is odd, we use Lemma \ref{lem:GaussSums} (2) to evaluate the first Gauss sum, yielding, after simplification, 
\begin{align}\nonumber
G(a,b;c)=2^{\gamma}  \varepsilon_{\frac{k_0}{g_0}}  \sqrt{k_0 g_0} &\left(\frac{2^{\ell+\kappa} h g_0}{\frac{k_0}{g_0}}\right) e^{-\frac{2\pi i  hr_0^2}{\frac{k_0}{g_0}} 2^{\ell+2\varrho}\left[2^{\kappa}g_0\right]_{\frac{k_0}{g_0}}}\\
\label{eqn:tosplit2pow}
&\hspace{1cm}\times G\left(2^{\ell+2\mu-\gamma} h M_0 \frac{M_0}{g_0}\frac{k_0}{g_0}, 2^{\ell+\mu+\varrho+1-\gamma} hr_0\frac{M_0}{g_0}; 2^{\kappa-\gamma}\right). 
\end{align}
It remains to evaluate the final Gauss sum in \eqref{eqn:tosplit2pow}. We use Lemma \ref{lem:GaussSums} (4) and Lemma \ref{lem:GaussSums} (5) to obtain 
\begin{align}
\label{eqn:GaussEven}
&G\left(2^{\ell+2\mu-\gamma} h M_0 \frac{M_0}{g_0}\frac{k_0}{g_0}, 2^{\ell+\mu+\varrho+1-\gamma} hr_0\frac{M_0}{g_0}; 2^{\kappa-\gamma}\right)\\
\nonumber& =\begin{cases}
1 &\text{if }\kappa\leq \ell+2\mu,\\
2&\text{if }\kappa=\ell+2\mu+1,\, \varrho=\mu-1,\\
2^{\frac{\kappa-\ell-2\mu}{2}} (1+i) \left(\frac{-2^{\ell+\kappa}}{h M_0 \frac{M_0}{g_0}\frac{k_0}{g_0}}\right) \varepsilon_{h M_0 \frac{M_0}{g_0}\frac{k_0}{g_0}} e^{-\frac{2\pi i hr_0^2}{2^{\kappa-\ell-2\mu}}\left[k_0\right]_{2^{\kappa-\ell-2\mu}} }&\text{if }\kappa\geq \ell+2\mu+2,\, \varrho=\mu,\\
0&\text{otherwise}.
\end{cases}
\end{align}
Plugging \eqref{eqn:GaussEven} into \eqref{eqn:tosplit2pow} and then simplifying yields that $G(a,b;c)$ equals
\[
\varepsilon_{\frac{k_0}{g_0}}\sqrt{kg_0}
\left(\tfrac{h g_0}{\frac{k_0}{g_0}}\right)
 \begin{cases}
2^{\frac{\kappa}{2}} \left(\tfrac{2^{\ell+\kappa} }{\frac{k_0}{g_0}}\right)  e^{-\frac{2\pi i   hr_0^2}{\frac{k_0}{g_0}}2^{\ell+2\varrho}\left[2^{\kappa}g_0\right]_{\frac{k_0}{g_0}}}  &\hspace{-.2cm}\begin{array}{l}\text{if }\kappa\leq \ell+2\mu\\ \quad \text{or }\kappa=\ell+2\mu+1,\, \varrho=\mu-1,\end{array}\\
2^{\frac{\ell+2\mu}{2}}(1+i) \varepsilon_{h M_0 \frac{M_0}{g_0}\frac{k_0}{g_0}}   \left(\frac{-2^{\ell+\kappa}}{h M_0 \frac{M_0}{g_0}\frac{k_0}{g_0}}\right) \left(\tfrac{2^{\ell+\kappa} }{\frac{k_0}{g_0}}\right) &\text{if }\kappa\geq \ell+2\mu+2,\, \varrho=\mu,\\
\hspace{0.1cm} \times e^{-\frac{2\pi i hr_0^2}{\frac{k_0}{g_0}}  2^{\ell+2\mu}\left[2^{\kappa}g_0\right]_{\frac{k_0}{g_0}} }e^{-\frac{2\pi i  hr_0^2}{2^{\kappa-\ell-2\mu}}\left[k_0\right]_{2^{\kappa-\ell-2\mu}}}&\\
0&\text{otherwise}.
\end{cases}
\]
To obtain the claim, we multiply by $e^{\frac{2\pi i 2^{\ell}hr^2}{k}}$ and simplify by using the Chinese Remainder Theorem to combine the exponentials. For example, if $\kappa\leq \ell+2\mu$ or ($\kappa=\ell+2\mu+1$ and $\varrho=\mu-1$), then the exponential becomes
\[
e^{\frac{2\pi i hr_0^2}{2^{\kappa-\delta}k_0} 2^{\ell+2\varrho-\delta}\left( 1-2^{\kappa} g_0 \left[2^{\kappa}g_0\right]_{\frac{k_0}{g_0}}\right)}. 
\]
Since $\gcd(g_0,\frac{k_0}{g_0})=g_1=1$ and $k_0$ is odd, to determine $1-2^{\kappa}g_0 \left[2^{\kappa}g_0\right]_{\frac{k_0}{g_0}} \pmod{2^{\kappa-\delta}k_0}$ the Chinese Remainder Theorem implies that it suffices to compute
\begin{align*}
1-2^{\kappa}g_0 \left[2^{\kappa}g_0\right]_{\frac{k_0}{g_0}}&\equiv 1 \pmod{g_0},\qquad 1-2^{\kappa}g_0 \left[2^{\kappa}g_0\right]_{\frac{k_0}{g_0}}\equiv 0 \pmod{\frac{k_0}{g_0}},\\
 1-2^{\kappa}g_0 \left[2^{\kappa}g_0\right]_{\frac{k_0}{g_0}}&\equiv 1 \pmod{2^{\kappa-\delta}}.
\end{align*}
Thus
\[
1-2^{\kappa}g_0 \left[g_0\right]_{\frac{k_0}{g_0}}\equiv \frac{k_0}{g_0}\left[\frac{k_0}{g_0}\right]_{2^{\kappa-\delta}g_0} \pmod{2^{\kappa-\delta}k_0}.
\]
So the exponential simplifies in this case as $e^{\frac{2\pi i  hr_0^2}{2^{\kappa-\delta}g_0}2^{\ell+2\varrho-\delta}[\frac{k_0}{g_0}]_{2^{\kappa-\delta}g_0}}$.

The remaining case $\kappa\geq \ell+2\mu+2$ and $\varrho=\mu$ follows by a similar but longer and more tedious calculation.
\end{proof}

\section{Growth towards the cusps of certain modular forms}\label{sec:growth} 
In this section, we determine the growth towards the cusps of theta functions $\Theta_{r,M,\bm{\alpha}}$ and certain (non-holomorphic) Eisenstein series. The purpose of this calculation is to compare the growth in order to determine the unique Eisenstein series $E_{r,M,\bm{\alpha}}$ in \eqref{eqn:thetadecompose} whose growth towards the cusps matches that of the theta function.

\subsection{Growth of the theta functions at the cusps}\label{sec:thetagrowth}

In order to obtain the Eisenstein series, we determine the growth of $\Theta_{r,M,\bm{\alpha}}$ towards all of the cusps, which follows by a straightforward calculation.

\begin{lemma}\label{lem:Thetacuspgrowth}
Let $m\in\N_{\geq 3}$ and $\bm{\alpha}\in\N^4$ be given. 
For $h\in\Z$ and $k\in\N$ with $\gcd(h,k)=1$, we have
\[ 
-\lim_{z\to 0^+} z^{2}\Theta_{r,M,\bm{\alpha}} \left( \frac hk+\frac{iz}{k}\right) = -\frac{1}{4k^2M^4\prod_{j=1}^4\sqrt{\alpha_j}} \prod_{j=1}^4 e^{\frac{2\pi i r^2h\a_j}{k}} G \left(h\alpha_jM^2,2hr\alpha_jM;k\right).
\]
\end{lemma}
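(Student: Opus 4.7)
The strategy is to split $\Theta_{r,M,\bm{\alpha}}$ as a product of four one-dimensional theta-type series, extract the leading behaviour of each factor at the cusp $h/k$ by Poisson summation, and identify the character sum that appears as precisely the Gauss sum on the right-hand side.

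First, since the summation condition $x_j\equiv r\pmod M$ and the quadratic form $\sum_j\alpha_j x_j^2$ both decouple across coordinates, we can write
\[
\Theta_{r,M,\bm{\alpha}}(\tau)=\prod_{j=1}^4 \theta_{r,M,\alpha_j}(\tau),\qquad
\theta_{r,M,\alpha}(\tau):=\sum_{x\equiv r\pmod M}q^{\alpha x^2}.
\]
Fix one factor and set $\tau=h/k+iz/k$. The plan is to split the sum over $x\equiv r\pmod M$ into residue classes modulo $kM$: write $x=\ell+kMn$ where $\ell=r+Mt$ with $t\in\{0,1,\dots,k-1\}$ and $n\in\Z$. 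Expanding $(\ell+kMn)^2$ one checks that the factor $e^{2\pi i \alpha h(\ell+kMn)^2/k}$ depends only on $\ell\pmod k$, so it pulls outside the inner sum over $n$, leaving
\[
\theta_{r,M,\alpha}\!\left(\tfrac{h}{k}+\tfrac{iz}{k}\right)
=\sum_{t=0}^{k-1} e^{\frac{2\pi i \alpha h(r+Mt)^2}{k}}\sum_{n\in\Z} e^{-\frac{2\pi\alpha z}{k}(r+Mt+kMn)^2}.
\]

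Next I would apply Poisson summation to the inner sum, using $\widehat{e^{-a x^2}}(\xi)=\sqrt{\pi/a}\,e^{-\pi^2\xi^2/a}$ with $a=2\pi\alpha z/k$. This gives
\[
\sum_{n\in\Z} e^{-\frac{2\pi\alpha z}{k}(\ell+kMn)^2}
=\frac{1}{M\sqrt{2\alpha k z}}\sum_{m\in\Z} e^{-\frac{\pi k m^2}{2\alpha z (kM)^2}}e^{\frac{2\pi i m\ell}{kM}},
\]
and as $z\to 0^+$ all terms with $m\neq 0$ decay exponentially, so the $m=0$ term dominates. Substituting $\ell=r+Mt$, expanding the quadratic in $t$, and pulling out $e^{2\pi i \alpha h r^2/k}$ identifies the remaining sum over $t\in\Z/k\Z$ as exactly $G(\alpha h M^2,\,2\alpha h r M;\,k)$. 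Therefore
\[
\theta_{r,M,\alpha}\!\left(\tfrac{h}{k}+\tfrac{iz}{k}\right)
=\frac{e^{\frac{2\pi i \alpha h r^2}{k}}\,G(\alpha h M^2,2\alpha h r M;k)}{M\sqrt{2\alpha k z}}+O\!\left(e^{-c/z}\right)
\]
for some $c>0$ depending on $k,M,\alpha$.

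Finally, I multiply the four factors and multiply by $z^2$. The four prefactors $1/(M\sqrt{2\alpha_j k z})$ combine to $1/(4 k^2 z^2 M^4\prod_j\sqrt{\alpha_j})$, the exponential error terms still vanish, and the exponential and Gauss-sum factors multiply to give the claimed product. Negating both sides yields the statement. The only non-routine step is the Poisson summation identification of the leading term, which is standard; the rest is bookkeeping to match the shape of the Gauss sum and the constants.
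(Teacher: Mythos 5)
Your proposal is correct and is exactly the ``straightforward calculation'' the paper has in mind (the paper omits the proof): factor $\Theta_{r,M,\bm{\alpha}}$ into four one-variable theta series, split each into residue classes modulo $kM$, apply Poisson summation so that only the $m=0$ term survives as $z\to0^+$, and recognize the resulting character sum as $e^{2\pi i\alpha_j hr^2/k}G(h\alpha_jM^2,2hr\alpha_jM;k)$, with the prefactors combining to $\bigl(4k^2z^2M^4\prod_j\sqrt{\alpha_j}\bigr)^{-1}$. The only cosmetic slip is that the subleading terms are $O\bigl(z^{-1/2}e^{-c/z}\bigr)$ rather than $O\bigl(e^{-c/z}\bigr)$, which of course still vanishes after multiplying by $z^2$ and taking the limit.
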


We next use Lemma \ref{lem:G(a,b,c)eval} to evaluate the right-hand side of Lemma \ref{lem:Thetacuspgrowth}.  Since the theta function $\Theta_{r,M,\bm{\alpha}}$ only depends on $r$ modulo $M$, we may assume without loss of generality  that 
\[
\varrho=\ord_2(r)\leq \ord_2(M)=\mu
\]
 by replacing $r$ with $r+M$ in Lemma \ref{lem:Thetacuspgrowth} if $\varrho>\mu$.  A direct calculation gives the following.
\begin{corollary}\label{cor:Thetacuspgrowtheval}
Suppose that $h\in\Z$ and $k\in\N$ with $\gcd(h,k)=1$, $\bm{\alpha}=(1,2,4,8)$, $r\in\Z$, and $M\in\N$ with $\gcd(M,r)\in\{1,2,4\}$ and $\ord_2(r)\leq \ord_2(M)$. If $g_1\neq 1$ or $\varrho<\min(\mu,k-\ell-\mu)-1$, then
\[
-\lim_{z\to 0^+} z^{2}\Theta_{r,M,\bm{\alpha}} \left( \frac hk+\frac{iz}{k}\right)=0.
\]
If $g_1=1$ and $\varrho\ge\min(\mu,k-\ell-\mu)-1$, then, setting $\delta_0:=\min(\kappa,2\varrho)$,
\begin{align*}
-\lim_{z\to 0^+} z^{2}\Theta_{r,M,\bm{\alpha}}&\left( \frac hk+\frac{iz}{k}\right)\\
 &= \begin{cases}
- \frac{2^{2\kappa-4\mu-5}}{M_0^4}  g_0^2  e^{\frac{2\pi i hr_0^2}{2^{\kappa-\delta_0}g_0}  2^{2\varrho-\delta_0} 15 \left[\frac{k_0}{g_0}\right]_{2^{\kappa-\delta_0}g_0}}&\text{if }\kappa\leq 2\mu,\\
&\quad\text{or }\kappa=2\mu+1\text{ and }\varrho=\mu-1,\\
\frac{g_0^2}{M_0^4}   e^{\frac{2\pi i hr_0^2}{g_0}  15\left[2^{\kappa-2\mu}\frac{k_0}{g_0}\right]_{g_0}} &\text{if }\kappa\geq 2\mu+5\text{ and }\varrho=\mu,\\
0&\text{otherwise.}
\end{cases}
\end{align*}
\end{corollary}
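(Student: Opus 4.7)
My plan is to apply Lemma \ref{lem:G(a,b,c)eval} to each of the four Gauss sums appearing in Lemma \ref{lem:Thetacuspgrowth} with $\bm{\alpha} = (1,2,4,8)$ and then combine the results. With $\alpha_j = 2^{j-1}$, the $j$-th factor in the product of Lemma \ref{lem:Thetacuspgrowth} reads
\[
e^{\frac{2\pi i 2^{j-1}hr^2}{k}} G\!\left(2^{j-1}hM^2,\,2^{j}hrM;\,k\right),
\]
which matches the Gauss sum in Lemma \ref{lem:G(a,b,c)eval} with $\ell = j-1 \in \{0,1,2,3\}$. Since $\prod_{j=1}^{4}\sqrt{\alpha_j} = 8$, the global prefactor in Lemma \ref{lem:Thetacuspgrowth} becomes $-1/(32 k^2 M^4)$, so all that remains is to evaluate a product of four Gauss sums and absorb this prefactor.

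First I would handle the vanishing cases. Because $g_1 = \gcd(g_0,k_0/g_0)$ depends only on $M$ and $k$, part (1) of Lemma \ref{lem:G(a,b,c)eval} kills every factor as soon as $g_1 \neq 1$. Similarly, if $\varrho < \min(\mu,\kappa-\ell-\mu)-1$ for some $\ell \in \{0,1,2,3\}$, the corresponding factor vanishes; the strictest condition comes from $\ell = 0$, which is exactly the vanishing criterion stated in the corollary.

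When $g_1 = 1$ and $\varrho \geq \min(\mu,\kappa-\mu)-1$, the main bookkeeping task is to determine, for each $\ell$, which of the three subcases of Lemma \ref{lem:G(a,b,c)eval}(2) applies, using that the thresholds $\kappa \leq \ell+2\mu$ and $\kappa \geq \ell+2\mu+2$ shift with $\ell$. Checking simultaneous compatibility across $\ell = 0,1,2,3$ is very restrictive: I expect it to force exactly the regime $\kappa \leq 2\mu$ (all four factors in the main subbranch of (2a)); the boundary regime $\kappa = 2\mu+1$ with $\varrho = \mu-1$ (the $\ell=0$ factor hits the exceptional ``$\kappa = \ell+2\mu+1$'' doubling in (2a), while the other three stay in the main subbranch); or the regime $\kappa \geq 2\mu+5$ with $\varrho = \mu$ (all four in (2b)). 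Any other $(\kappa,\varrho)$ would impose conflicting branch conditions at some $\ell$, which accounts for the vanishing in the transitional range $\kappa \in \{2\mu+2,2\mu+3,2\mu+4\}$ as well as for generic $\varrho \neq \mu-1,\mu$.

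In each non-vanishing regime I would multiply the four evaluations and simplify: the four $\sqrt{k g_0}$ factors combine to $k^2 g_0^2$ and cancel the $k^{-2}$ in the prefactor; the $2^{\kappa/2}$ or $2^{(\ell+2\mu)/2}$ factors combine with $M^{-4} = 2^{-4\mu} M_0^{-4}$ to produce the exact power of $2$ divided by $M_0^4$ stated in the corollary; the $\varepsilon$-symbols and Jacobi symbols simplify because their $2^{\ell+\kappa}$ entries only contribute nontrivially modulo squares and the four $\ell$-values span an arithmetic progression whose symbols collapse; finally, each of the four exponentials is proportional to $2^\ell$, so summing over $\ell \in \{0,1,2,3\}$ produces the telltale factor $1+2+4+8 = 15$. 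The main obstacle lies in this last step: since the $\delta$ in Lemma \ref{lem:G(a,b,c)eval}(2) genuinely depends on $\ell$, collapsing the four exponentials into a single one with the $\ell$-independent $\delta_0 = \min(\kappa,2\varrho)$ of the corollary requires exactly the Chinese Remainder Theorem manipulation used at the end of the proof of Lemma \ref{lem:G(a,b,c)eval}, reconciling inverse residues taken to different moduli.
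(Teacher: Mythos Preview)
Your proposal is correct and follows exactly the approach the paper takes: the paper's proof is literally the single sentence ``A direct calculation gives the following,'' and you have correctly spelled out what that calculation entails---applying Lemma~\ref{lem:G(a,b,c)eval} to each of the four factors in Lemma~\ref{lem:Thetacuspgrowth} with $\ell=j-1\in\{0,1,2,3\}$, determining for each $(\kappa,\varrho)$ which branch of part~(2) every factor lands in, and multiplying. Your two key observations---that $\sum_{\ell=0}^3\ell=6$ forces the product of Jacobi symbols and $\varepsilon$-factors to collapse (everything becomes a square), and that $\sum_{\ell=0}^3 2^\ell=15$ produces the coefficient in the combined exponential---are exactly the simplifications needed, and the reconciliation of the $\ell$-dependent $\delta$'s with the single $\delta_0$ is indeed just a modulus-reduction for the inverse as you anticipate.
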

\begin{remark}
Although the right-hand side of Corollary \ref{cor:Thetacuspgrowtheval} splits into a number of cases, we obtain an explicit element of the cyclotomic field $\Q(\zeta_{2^{j}g_0})$ for some $j\in\N_0$, where $\zeta_{\nu}:=e^{\frac{2\pi i}{\nu}}$. To use Corollary \ref{cor:Thetacuspgrowtheval} for practical purposes, one can evaluate the right-hand side of Corollary \ref{cor:Thetacuspgrowtheval} with a computer as an element of $\Q(\zeta_\nu)\cong \Q[x]/\left<f_{\nu}\right>$, where $f_\nu$ is the minimal polynomial of $\zeta_\nu$ over $\Q$, which is well-known to be 
\[
f_\nu(x)=\prod_{\substack{1\leq k\leq \nu\\ \gcd(k,\nu)=1}}\left(x-\zeta_\nu^k\right) = \prod_{d\mid \nu} \left(x^d-1\right)^{\mu\left(\frac{\nu}{d}\right)}.
\]
Here $\mu$ denotes the \begin{it}M\"obius $\mu$-function\end{it}.
\end{remark}

\subsection{Growth of Eisenstein series towards the cusps} 

The goal of this section is to obtain the growth of certain weight two Eisenstein series towards the cusps. These are formed by applying certain sieving and $V$-operators to the (non-holomorphic but modular) weight two Eisenstein series 
\[
\widehat{E}_2(\tau):=E_2(\tau)-\frac{3}{\pi v}, \qquad\text{ where }\qquad E_2(\tau):=1-24\sum_{n\in\N} \sigma(n) q^n
\]
with $\sigma(n):=\sigma_1(n)$. In light of Lemma \ref{lem:VSieveCommute}, we may furthermore always assume without loss of generality that sieving is applied before the $V$-operator. The growth towards the cusps of such functions is given in the following lemma.

\begin{lemma}\label{lem:ConstantE2SieveV}
Let $m\in\Z$ and $M_1,M_2\in\N$. Then for $h\in\Z$ and $k\in\N$ with $\gcd(h,k)=1$ we have
\[
-\lim_{z\to 0^+}z^2\widehat{E}_2\big|S_{M_1,m}\big|V_{M_2}\left(\frac{h}{k}+\frac{iz}{k}\right) = \frac{1}{M_1^3M_2^2}\sum_{j\pmod{M_1}} \gcd\left(hM_1M_2+jk,M_1k\right)^2 \zeta_{M_1}^{-jm}.
\]
\end{lemma}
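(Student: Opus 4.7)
The plan is to rewrite both operators as explicit sums of shifts/scalings, then evaluate the growth at each resulting cusp by transporting it to $i\infty$ via modularity of $\widehat{E}_2$.

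First, I would express the sieving operator via the standard orthogonality of additive characters, yielding
\[
f\big|S_{M_1,m}(\tau) = \frac{1}{M_1}\sum_{j\pmod{M_1}} \zeta_{M_1}^{-jm}\, f\!\left(\tau+\tfrac{j}{M_1}\right),
\]
which is valid even for translation-invariant non-holomorphic $f$ because the Fourier expansion $f(\tau)=\sum c_{f,v}(n) q^n$ only affects $v$-dependence in the coefficients, not in the $e^{2\pi i n u}$ factor that orthogonality acts on. Combined with $f\big|V_{M_2}(\tau)=f(M_2\tau)$ (which again holds for the non-holomorphic functions at hand, since $V_{M_2}$ rescales $v\mapsto M_2 v$ consistently with the given definition), this yields
\[
f\big|S_{M_1,m}\big|V_{M_2}(\tau) = \frac{1}{M_1}\sum_{j\pmod{M_1}} \zeta_{M_1}^{-jm}\, f\!\left(M_2\tau+\tfrac{j}{M_1}\right).
\]

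Next, I would substitute $\tau=\tfrac{h}{k}+\tfrac{iz}{k}$, giving argument $\tfrac{hM_1M_2+jk}{M_1 k}+\tfrac{iM_2 z}{k}$ in the $j$-th summand. Setting $d_j:=\gcd(hM_1M_2+jk,M_1k)$ and writing the real part in lowest terms as $h'/k'$ with $k'=M_1k/d_j$, the imaginary part becomes $iz'/k'$ with $z'=M_1M_2 z/d_j$. Since $\gcd(h',k')=1$, I would choose $b,d'\in\Z$ with $h'd'-bk'=1$ and set $\gamma=\kzxz{h'}{b}{k'}{d'}\in\SL_2(\Z)$. Using the standard identity $\gamma\sigma=\tfrac{h'}{k'}-\tfrac{1}{k'(k'\sigma+d')}$, the choice $\sigma=-d'/k'+i/(k'z')$ gives $\gamma\sigma=h'/k'+iz'/k'$ and $k'\sigma+d'=i/z'$. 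Applying $\widehat{E}_2\big|_2\gamma=\widehat{E}_2$ then yields
\[
\widehat{E}_2\!\left(\tfrac{h'}{k'}+\tfrac{iz'}{k'}\right) = -\frac{1}{z'^2}\,\widehat{E}_2\!\left(-\tfrac{d'}{k'}+\tfrac{i}{k'z'}\right).
\]

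Since $\widehat{E}_2(\sigma)\to 1$ as $\im(\sigma)\to\infty$ (both $E_2(\sigma)\to 1$ and $-3/(\pi\im\sigma)\to 0$), I obtain
\[
-\lim_{z\to 0^+} z^2\,\widehat{E}_2\!\left(\tfrac{h'}{k'}+\tfrac{iz'}{k'}\right) = \lim_{z\to 0^+}\frac{z^2}{z'^2} = \frac{d_j^2}{M_1^2 M_2^2}.
\]
Summing over $j$ and collecting the prefactor $1/M_1$ gives the claimed formula. The main technical care is only to check that the sieving/$V$-operator identities extend to the non-holomorphic $\widehat{E}_2$ (straightforward from the definitions) and that the modular transformation of $\widehat{E}_2$ accounts for the $3/(\pi v)$ term (automatic, as $\widehat{E}_2$ is by construction weight-$2$ modular); no genuine obstacle arises.
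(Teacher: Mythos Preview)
Your proof is correct and follows essentially the same approach as the paper's: express $S_{M_1,m}$ via orthogonality of additive characters, apply $V_{M_2}$, then use the weight-two modularity of $\widehat{E}_2$ to transport each summand to $i\infty$. The paper's proof simply records the first two steps and then declares the rest ``a standard calculation,'' which is precisely the computation you have written out in detail.
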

 \begin{proof}
For a translation-invariant function $f$, we use the presentation 
\begin{equation*}
f|S_{M_1,m}(\tau)=\frac{1}{M_1} \sum_{j=0}^{M_1-1} \zeta_{M_1}^{-jm}f\lp \tau+\frac{j}{M_1} \rp.
\end{equation*}
Applying $V_{M_2}$ to this yields
\begin{equation*}
f|S_{M_1,m}\big|V_{M_2}(\tau)=\frac{1}{M_1} \sum_{j=0}^{M_1-1} \zeta_{M_1}^{-jm}f\lp M_2\tau+\frac{j}{M_1} \rp.
\end{equation*}
Plugging in $f=\widehat{E}_2$ and using the weight two modularity of $\widehat{E}_2$, the claim follows by a standard calculation. 
\end{proof}

\section{Eisenstein series component}\label{sec:Eisenstein}

In this section, we determine the Eisenstein series component $E_{r,M,\bm{\alpha}}$ in \eqref{eqn:thetadecompose}. 
\begin{proposition}\label{prop:Eisenstein} 
For $n\in\N$, we have the following. 
\begin{enumerate}[leftmargin=*,label={\rm(\arabic*)}]
	\item For $m=7$, we have $a_{7,10,(1,2,4,8)}(n)=0$ unless $n\equiv15\pmod{40}$, in which case we have
	\[
		a_{7,10,(1,2,4,8)}(n) = \frac{1}{240}\left(\sigma(n)-\sigma\left(\frac n5\right)\right).
	\]

	\item For $m=9$, we have $a_{9,14,(1,2,4,8)}(n)$ unless $n\equiv39\pmod{56}$, in which case we have 
	\[
		a_{9,14,(1,2,4,8)}(n) = \frac{1}{672} \sigma(n).
	\]

	\item For $m=10$, we have $a_{10,16,(1,2,4,8)}(n)$ unless $n\equiv28\pmod{64}$, in which case we have 
	\[
		a_{10,16,(1,2,4,8)}(n) = \frac{1}{256} \sigma\left(\frac n4\right).
	\]

	\item For $m=11$, we have $a_{11,18,(1,2,4,8)}(n)$ unless $n\equiv15\pmod{72}$, in which case we have 
	\[
		a_{11,18,(1,2,4,8)}(n) = \frac{1}{1728} \sigma(n).
	\]

	\item For $m=12$, we have $a_{12,20,(1,2,4,8)}(n)=0$ unless $80\mid n$, in which case we have
	\begin{multline*}
		\hspace{.4cm}a_{12,20,(1,2,4,8)}(n)=\frac{1}{120} \left(\sigma\left(\frac{n}{16}\right)-\sigma\left(\frac{n}{32}\right) - \sigma\left(\frac{n}{80}\right) +\sigma\left(\frac{n}{160}\right) +8\sigma\left(\frac{n}{256}\right)-32\sigma\left(\frac{n}{512}\right)\right.\\
		\left.-8\sigma\left(\frac{n}{1280}\right)+32\sigma\left(\frac{n}{2560}\right) \right).
	\end{multline*}

	\item For $m=13$, we have $a_{13,22,(1,2,4,8)}(n)=0$ unless $n\equiv71\pmod{88}$, in which case we have 
	\[
		a_{13,22,(1,2,4,8)}(n) = \frac{1}{2640} \sigma(n).
	\]

	\item For $m=14$, we have
	\[
		a_{14,24,(1,2,4,8)}(n) = 
		\begin{cases}
			\frac{1}{768} 		\left(\sigma\left(\frac{n}{4}\right)-\sigma\left(\frac{n}{12}\right)\right)&\text{if }n\equiv 60\pmod{96},\\
			0&\text{otherwise}.
		\end{cases}
	\]
\end{enumerate}

\end{proposition}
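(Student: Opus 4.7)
The plan is, for each $m\in\{7,9,10,11,12,13,14\}$ (with $M=2(m-2)$), to construct an explicit candidate Eisenstein series $\widetilde{E}_m$ built from sieved $V$-operator shifts of the completed weight-two Eisenstein series $\widehat{E}_2$, and then to verify that $\widetilde{E}_m$ agrees with $\Theta_{m,M,(1,2,4,8)}$ at every cusp of $\Gamma_{4\cdot 8\cdot M^2,M}$. Since a weight-two modular form on this group is determined modulo cusp forms by its constant terms at all cusps, this identification forces $E_{m,M,(1,2,4,8)}=\widetilde{E}_m$, and reading off Fourier coefficients yields the formulas in (1)--(7).

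Inspecting each stated formula determines the shape of the candidate. For $m=7$, the appearance of $\sigma(n)-\sigma(n/5)$ on the progression $n\equiv 15\pmod{40}$ suggests
\[
\widetilde{E}_7 \;=\; -\tfrac{1}{5760}\bigl(\widehat{E}_2-\widehat{E}_2\big|V_5\bigr)\big|S_{40,15};
\]
for $m\in\{9,10,11,13\}$ a single term $c\cdot\widehat{E}_2|V_d|S_{M_1,m_0}$ suffices; for $m=14$ the shape $\sigma(n/4)-\sigma(n/12)$ forces a two-term combination; and for $m=12$ the eight-term divisor-sum expression forces an analogous eight-term linear combination involving $V_d$ for $d\in\{16,32,80,160,256,512,1280,2560\}$, sieved by $S_{80,0}$. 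By Lemma \ref{lem:modularoperators}, each candidate lies in the correct space $M_2(\Gamma_{4\cdot 8\cdot M^2,M},\chi)$, matching $\Theta_{m,M,(1,2,4,8)}$ by Lemma \ref{lem:ThetaModular}.

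The verification step has two parallel computations at each cusp $h/k$. On the theta side, Corollary \ref{cor:Thetacuspgrowtheval} (with $\bm{\alpha}=(1,2,4,8)$, $r=m$) gives $-\lim_{z\to 0^+}z^2\Theta_{m,M,(1,2,4,8)}(h/k+iz/k)$ as an explicit cyclotomic number piecewise in the 2-adic data $(\kappa,\mu,\varrho)$ and in $g_0=\gcd(M_0,k_0)$, with the case $g_1\neq 1$ giving zero. On the Eisenstein side, Lemma \ref{lem:ConstantE2SieveV} evaluates the corresponding quantity for each summand of $\widetilde{E}_m$ as a sum $\tfrac{1}{M_1^3M_2^2}\sum_{j\pmod{M_1}}\gcd(hM_1M_2+jk,M_1k)^2\zeta_{M_1}^{-jm_0}$. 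Matching these two expressions for all $(h,k)$ with $\gcd(h,k)=1$ completes the identification: the difference $\Theta_{m,M,(1,2,4,8)}-\widetilde{E}_m$ is then a weight-two form with vanishing constant term at every cusp, hence cuspidal, so by the uniqueness of \eqref{eqn:thetadecompose} it equals $f_{m,M,(1,2,4,8)}$ and $\widetilde{E}_m=E_{m,M,(1,2,4,8)}$.

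The main obstacle is the cusp-by-cusp matching. Corollary \ref{cor:Thetacuspgrowtheval} splits into several subcases depending on inequalities among $\kappa=\ord_2(k)$, $\mu=\ord_2(M)$, $\varrho=\ord_2(r)$, and on the gcd data; the exponential factor involves $[\tfrac{k_0}{g_0}]_{2^{\kappa-\delta_0}g_0}$ and only simplifies after invoking the Chinese Remainder Theorem. For $m\in\{7,9,11,13\}$ (where $\mu=1$) the analysis is comparatively short, while for $m\in\{10,12,14\}$ (with $\mu\geq 2$) substantially more subcases arise. The $m=12$ case is most delicate: the eight-term combination is forced precisely by the cancellations required at cusps above $2$, whereas no shorter combination can simultaneously match the theta function at every such cusp. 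No new tools beyond those assembled in Sections \ref{sec:prelim} and \ref{sec:growth} are required; the work is a careful orchestration of Gauss-sum evaluations, cusp by cusp.
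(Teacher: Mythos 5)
Your proposal follows essentially the same route as the paper: you guess the same candidate Eisenstein combinations (identical to the paper's after commuting sieving and $V$-operators via Lemma \ref{lem:VSieveCommute}), place both sides in the appropriate space using Lemmas \ref{lem:ThetaModular} and \ref{lem:modularoperators}, and then match constant terms at all cusps via Corollary \ref{cor:Thetacuspgrowtheval} and Lemma \ref{lem:ConstantE2SieveV}, exactly as in the paper (which carries out this cusp-by-cusp verification by computer). The argument is correct as outlined.
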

\begin{proof}\leavevmode\newline
(1) By comparing Fourier coefficients, we see that the identity is equivalent to 
\begin{equation}\label{eqn:guessmain2equiv}
E_{7,10,(1,2,4,8)}=-\frac{1}{5760}E_2\big|\left(1-V_5\right)\big|S_{40,15}.
\end{equation}
Lemma \ref{lem:ThetaModular} and \eqref{eqn:thetadecompose} give that
\[
	E_{7,10,\left(1,2,4,8\right)} \in M_2\Big(\Gamma_{3200,10}\Big),
\] 
while  Lemma \ref{lem:modularoperators} implies that
\[
	E_2\big|\left(1-V_5\right)\big|S_{40,15}\in M_2\left(\Gamma_1\left(1600\right)\right).
\]
Enumerating the cusps of $\Gamma_1(3200)$ (see \cite[Proposition 3.8.3]{DiamondShurman}), we then use a computer together with Lemma \ref{lem:ConstantE2SieveV} and Corollary \ref{cor:Thetacuspgrowtheval} to verify that the growth towards every cusp of both sides of \eqref{eqn:guessmain2equiv} agree, yielding the claim.

For the remaining cases, the argument is similar, but we provide the identities analogous to \eqref{eqn:guessmain2equiv} for the convenience of the reader. 

\noindent
(2) The claim is equivalent to 
\[
E_{9,14,(1,2,4,8)}=-\frac{1}{16128}E_2\big|S_{56,39}.
\]
(3) The claim is equivalent to 
\[
E_{10,16,(1,2,4,8)}=-\frac{1}{6144}E_2 \big|S_{16,7}\big|V_4.
\]
(4) The claim is equivalent to
\[
E_{11,18,(1,2,4,8)}=-\frac{1}{41472}E_2 \big|S_{72,15}.
\]
(5) The claim is equivalent to
\[
E_{12,20,(1,2,4,8)}=-\frac{1}{2880}E_2\big|\left(S_{5,0}-V_5\right)\big|\left(1- V_{2}+ 8V_{16}-32V_{32}\right)\big|V_{16}.
\]
(6) The claim is equivalent to
\[
E_{13,22,(1,2,4,8)}=-\frac{1}{63360}E_2 \big|S_{88,71}.
\]
(7) The claim is equivalent to
\[
E_{14,24,(1,2,4,8)}=-\frac{1}{18432} E_2 \big|\left(1-V_3\right)\big|S_{24,15}\big|V_4. \qedhere
\]
\end{proof}
As a corollary to Proposition \ref{prop:Eisenstein}, we obtain explicit lower bounds on the Fourier coefficients $a_{r,M,\bm{\alpha}}(n)$ in these special cases. 
\begin{corollary}\label{cor:EisensteinLower}
Let $n\in\N$.

\noindent
\begin{enumerate}[leftmargin=*,label={\rm(\arabic*)}]
\item If $n\equiv 15\pmod{40}$, then we have 
\[
a_{7,10,(1,2,4,8)}(n)\geq \frac{n}{240}.
\]
\item If $n\equiv 39\pmod{56}$, then we have
\[
a_{9,14,(1,2,4,8)}(n)\geq \frac{n}{672}.
\]
\item If $n\equiv 28\pmod{64}$, then we have 
\[
a_{10,16,(1,2,4,8)}(n)\geq \frac{n}{1024}.
\]
\item If $n\equiv 15\pmod{72}$, then we have
\[
a_{11,18,(1,2,4,8)}(n)\geq \frac{n}{1728}. 
\]
\item Assume that $80\mid n$ and write $n=2^a5^bc$ with $\gcd(10,c)=1$. We have 
\[
a_{12,20,(1,2,4,8)}(n)\geq \frac{5^b c}{120}\begin{cases} 2^{a-4}&\text{if }4\leq a\leq 7,\\  24&\text{if }a\geq 8.\end{cases}
\]
\item If $n\equiv 71\pmod{88}$, then we have 
\[
a_{13,22,(1,2,4,8)}(n)\geq \frac{n}{2640}.
\]
\item If $n\equiv 60\pmod{96}$, then we have 
\[
a_{14,24,(1,2,4,8)}(n)\geq \frac{n}{3072}.
\]

\end{enumerate}
\end{corollary}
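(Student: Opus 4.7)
The plan is to substitute each identity from Proposition~\ref{prop:Eisenstein} into the statement and bound the resulting divisor sums from below using two elementary facts: the trivial inequality $\sigma(m)\geq m$ for $m\in\N$, and the identity
\[
\sigma(p^ec)-\sigma(p^{e-1}c)=p^e\sigma(c)\qquad(\gcd(p,c)=1,\ e\geq 1),
\]
a consequence of the multiplicativity of $\sigma$ together with $\sigma(p^e)-\sigma(p^{e-1})=p^e$. Throughout I will interpret $\sigma(n/D):=0$ whenever $D\nmid n$, matching the generating-function definition of $E_{r,M,(1,2,4,8)}$.

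For the single-term cases (2), (4), and (6) the bound is immediate from $\sigma(n)\geq n$. Case (3) is the same, since $n\equiv 28\pmod{64}$ forces $4\mid n$, so $\sigma(n/4)\geq n/4$. In case (1), the congruence $n\equiv 15\pmod{40}$ forces $5\mid n$, and writing $n=5^bc$ with $\gcd(5,c)=1$ and $b\geq 1$, the displayed identity gives $\sigma(n)-\sigma(n/5)=5^b\sigma(c)\geq n$. Case (7) is similar: $12\mid n$, and setting $m:=n/12$ I split on whether $3\mid m$. If $\gcd(3,m)=1$, then $\sigma(3m)-\sigma(m)=3\sigma(m)\geq 3m=n/4$; if $m=3^ak$ with $\gcd(3,k)=1$ and $a\geq 1$, the identity yields $\sigma(3m)-\sigma(m)=3^{a+1}\sigma(k)\geq 3m=n/4$. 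Either way, $a_{14,24,(1,2,4,8)}(n)\geq n/3072$.

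The substantive work is in case (5). Write $n=2^a5^bc$ with $\gcd(10,c)=1$, $a\geq 4$, $b\geq 1$. Under the zero convention, only the first two of the eight terms survive when $a=4$, the first four survive when $5\leq a\leq 7$, six survive when $a=8$, and all eight are present when $a\geq 9$. Using multiplicativity and the identity $\sigma(2^k)-\sigma(2^{k-1})=2^k$ (with $\sigma(2^{-1}):=0$), the first four terms assemble uniformly as
\[
\bigl(\sigma(2^{a-4})-\sigma(2^{a-5})\bigr)\bigl(\sigma(5^bc)-\sigma(5^{b-1}c)\bigr)=2^{a-4}\cdot 5^b\sigma(c).
\]
For $a\geq 8$, the remaining four terms contribute $\bigl(8\sigma(2^{a-8})-32\sigma(2^{a-9})\bigr)\cdot 5^b\sigma(c)$, and a short computation gives $8\sigma(2^{a-8})-32\sigma(2^{a-9})=24-2^{a-4}$; the $2^{a-4}$ contributions cancel and the full sum collapses to $24\cdot 5^b\sigma(c)$ for every $a\geq 8$. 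Dividing by $120$ and applying $\sigma(c)\geq c$ yields the stated piecewise lower bound.

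The principal obstacle is purely the bookkeeping in case (5): tracking which of the eight arguments $n/D$ is a positive integer for each value of $a=\ord_2(n)$, and verifying that the telescoping across the $2$-adic and $5$-adic factors matches the piecewise constant coefficient in the claimed bound. No analytic input beyond Proposition~\ref{prop:Eisenstein} is needed.
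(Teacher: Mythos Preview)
Your proof is correct and follows the same approach as the paper, which simply says that parts (1)--(4), (6), (7) ``follow directly from Proposition~\ref{prop:Eisenstein}'' and that (5) reduces by ``a direct simplification'' to precisely the expression $\frac{5^b\sigma(c)}{120}\cdot\{2^{a-4}\text{ or }24\}$ you derive. One cosmetic slip: for $a=4$ it is the first and third of the eight terms (not ``the first two'') that survive, but your uniform computation with the convention $\sigma(2^{-1}):=0$ handles this correctly regardless.
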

\begin{proof}
For $m\neq 12$, the claims with the exception of (5) follow directly from Proposition \ref{prop:Eisenstein}. For (5), a direct simplification yields that the right-hand side of Proposition \ref{prop:Eisenstein} (5) simplifies as 
\[
\frac{5^b\sigma(c)}{120} \begin{cases} 2^{a-4}&\text{if }4\leq a\leq 7,\\ 24&\text{if }a\geq 8,\end{cases}
\]
which gives the claim.
\end{proof}

\section{Proof of Theorem \ref{thm:Sun1248sufflarge}}\label{sec:mainproof}

In this section, we prove  Theorem \ref{thm:Sun1248sufflarge}. The constants $C_m$ from the theorem statement may be found in Table \ref{tab:finalbounds}. 

\begin{proof}[Proof of Theorem \ref{thm:Sun1248sufflarge}]
We require the case $\ell=4$ of Lemma \ref{lem:KKT}. Since the inner sum only has a single term namely $m=0$ in this case, Lemma \ref{lem:KKT} simplifies as
\begin{multline}\label{eqn:cuspbound}
\|f_{r,M,\bm{\alpha}}\|^2\leq \frac{2\cdot 3^{6}}{\pi^{4}} \frac{M^{4}N_{\bm{\alpha}}^{2}}{\prod_{p\mid M^2N_{\bm{\alpha}}}\left(1-p^{-2}\right)}\\
\times \sum_{\delta\mid M^2N_{\bm{\alpha}}}\varphi\left(\frac{M^2N_{\bm{\alpha}}}{\delta}\right)\varphi(\delta)\frac{M^2N_{\bm{\alpha}}}{\delta}\left(\frac{\gcd(M^2,\delta)}{M^2}\right)^{4} 2
\left(\frac{27}{D_{\bm{\alpha}}} \frac{M^2N_{\bm{\alpha}}}{\pi} +16\right).
\end{multline}

For $\theta_{r,M,(1,2,4,8)}$, we obtain a lower bound for $a_{r,M,(1,2,4,8)}(n)$ (for $n$ in an appropriate congruence class) from Corollary \ref{cor:EisensteinLower} (see the third column of Table \ref{tab:bounds} for a list of the bounds for individual choices of $r$ and $M$).
 \begin{center}
\begin{table}[H]
\caption{Bounds for $a_{r,M,(1,2,4,8)}$, $\|f_{r,M,(1,2,4,8)}\|$, and $|b_{r,M,(1,2,4,8)}|$ \label{tab:bounds}}
\begin{tabular}{|c|c|c|c|c|}
\hline
$r$&$M$&Bound for $a_{r,M,(1,2,4,8)}$&Bound for $\|f_{r,M,(1,2,4,8)}\|$&Bound for $|b_{r,M,(1,2,4,8)}|^{\vphantom{a^b}}$\\[1ex]
\hline
$7$&$10$&$\frac{n}{240}$ &$8.11\cdot 10^{14}$ & $3.41\cdot 10^{30^{\vphantom{a^b}}} n^{\frac{3}{5}}$\\[1ex]
\hline
$9$&$14$&$\frac{n}{672}$ &$1.03\cdot 10^{16}$ & $3.48\cdot 10^{31^{\vphantom{a^b}}} n^{\frac{3}{5}}$\\[1ex]
\hline
$10$&$16$&$\frac{n}{1024}$ &$3.2\cdot 10^{16}$ &$9.98\cdot 10^{31^{\vphantom{a^b}}} n^{\frac{3}{5}}$ \\[1ex]
\hline
$11$&$18$&$\frac{n}{1728}$ &$6.1\cdot 10^{16}$ &$1.52\cdot 10^{32^{\vphantom{a^b}}} n^{\frac{3}{5}}$ \\[1ex]
\hline
$12$&$20$ &$\frac{n}{1920}$ &$1.49\cdot 10^{17}$ & $3.69\cdot 10^{32^{\vphantom{a^b}}}n^{\frac{3}{5}}$\\[1ex]
\hline
$13$& $22$& $\frac{n}{2640}$ &$2.55\cdot 10^{17}$ & $6.96\cdot 10^{32^{\vphantom{a^b}}}n^{\frac{3}{5}}$\\[1ex]
\hline
$14$& $24$ &$\frac{n}{3072}$ & $5.63\cdot 10^{17}$ & $1.09\cdot 10^{33^{\vphantom{a^b}}} n^{\frac{3}{5}}$ \\[1ex]
\hline
\end{tabular}
\end{table}
\end{center}

 Computing the constants in \eqref{eqn:cuspbound} explicitly for fixed $M$ yields an upper bound for $\|f_{r,M,(1,2,4,8)}\|^2$ (see the fourth column of Table \ref{tab:bounds} for the explicit bounds), which plugged into Lemma \ref{lem:BanerjeeKane} yields an upper bound for $|b_{r,M,(1,2,4,8)}(n)|$ (see the final column of Table \ref{tab:bounds} for the explicit bounds). Plugging the bounds for $a_{r,M,(1,2,4,8)}(n)$ and $|b_{r,M,(1,2,4,8)}(n)|$ into  \eqref{eqn:sabdecompose}, we see that $s_{r,M,(1,2,4,8)}(n)>0$ for $n$ sufficiently large in an appropriate congruence class (see Table \ref{tab:finalbounds} for the explicit constants). 
\begin{center}
\begin{table}[H]
\caption{Bounds on $n$ for $s_{m,2(m-2),(1,2,4,8)}(n)>0$ and $r_{m,(1,2,4,8)}>0$ \label{tab:finalbounds}}
\begin{tabular}{|c|c|c|}
\hline
$m$&Bound for $s_{m,2(m-2),(1,2,4,8)}(n)>0$&Bound $C_m$ for $r_{m,2(m-2),(1,2,4,8)}(n)>0^{\vphantom{a^b}}$\\[1ex]
\hline
$7$ & $1.92\cdot 10^{82^{\vphantom{a^b}}}$ &$4.8\cdot 10^{80}$ \\
\hline
$9$ & $8.38\cdot 10^{85^{\vphantom{a^b}}}$ &$1.5\cdot 10^{84}$ \\
\hline
$10$& $3.41\cdot 10^{87^{\vphantom{a^b}}}$& $5.33\cdot 10^{85}$ \\
\hline
$11$& $3.55\cdot 10^{88^{\vphantom{a^b}}}$&$4.93\cdot 10^{86}$ \\
\hline
$12$& $4.25\cdot 10^{89^{\vphantom{a^b}}}$  &$5.31\cdot 10^{87}$ \\
\hline
$13$& $4.57\cdot 10^{90^{\vphantom{a^b}}}$&$5.19\cdot 10^{88}$ \\
\hline
$14$& $2.04\cdot 10^{91^{\vphantom{a^b}}}$& $2.13\cdot 10^{89}$ \\
\hline
\end{tabular}
\end{table}
\end{center}
We then conclude that $r_{m,(1,2,4,8)}>0$ for $n$ sufficiently large by using \eqref{eqn:r*s*rel}, yielding the claim. \qedhere
\end{proof}

In order to explain why it is sufficient to assume that $n\not\equiv 4\pmod{16}$ for $m=12$ in Theorem \ref{thm:Sun1248sufflarge}, we require the following lemma combined with \eqref{eqn:r*s*rel}.
\begin{lemma}\label{lem:p122power}
Let $n\in\N$ be given. If the equation
\[
x_1^2+2x_2^2+4x_3^2+8x_4^2=n
\]
 is solvable with $x_j\equiv 12\pmod{20}$, then the equation 
\[
x_1^2+2x_2^2+4x_3^2+8x_4^2=256n
\]
is also solvable with $x_j\equiv 12\pmod{20}$.
\end{lemma}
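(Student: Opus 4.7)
The plan is to construct the desired solution for $256n$ directly from the given solution by coordinate-wise scaling. Concretely, given $(x_1,x_2,x_3,x_4)\in\Z^4$ with $x_j\equiv 12\pmod{20}$ satisfying $x_1^2+2x_2^2+4x_3^2+8x_4^2=n$, I would set $y_j:=16x_j$ and verify that $(y_1,y_2,y_3,y_4)$ is a valid solution for $256n$.

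First, because the quadratic form $Q(\bm{x}):=x_1^2+2x_2^2+4x_3^2+8x_4^2$ is homogeneous of degree $2$, we immediately have
\[
y_1^2+2y_2^2+4y_3^2+8y_4^2 = 16^2 \cdot Q(\bm x) = 256n,
\]
so the scaling is forced to be by a factor whose square is $256$, i.e., by $16$ (up to sign). The only substantive point to check is that this particular scalar preserves the residue class $12\pmod{20}$. This reduces, via the Chinese Remainder Theorem, to the congruences $16x_j\equiv 0\pmod 4$ and $16x_j\equiv 2\pmod 5$. The first holds automatically since $x_j\equiv 12\equiv 0\pmod 4$ (so in fact $16x_j\equiv 0\pmod{64}$), and the second follows from $16\equiv 1\pmod 5$ together with $x_j\equiv 12\equiv 2\pmod 5$. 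Combining, $y_j\equiv 12\pmod{20}$, as required; equivalently, a direct check gives $16\cdot 12=192=9\cdot 20+12$.

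There is essentially no obstacle: the lemma records the arithmetic fact that $16$ happens to lie in the stabilizer (under multiplication mod $20$) of the residue class $12$, so the obvious scaling $\bm x\mapsto 16\bm x$ works. The slight subtlety that might be worth highlighting is why the scale factor $16$ is forced: one cannot use a smaller factor such as $4$ (which would only give $16n$, not $256n$) without leaving the class $12\pmod{20}$, since $4\cdot 12=48\equiv 8\pmod{20}$. In combination with \eqref{eqn:r*s*rel}, this lemma allows one to reduce representability questions for indices $n\equiv 4\pmod{16}$ in the $m=12$ case of Conjecture~\ref{conj:Sun1248} to representability questions at a much larger value of the $s$-function, which in turn justifies the restriction $n\not\equiv 4\pmod{16}$ appearing in Theorem~\ref{thm:Sun1248sufflarge}.
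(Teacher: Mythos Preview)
Your argument is correct: the map $x_j\mapsto 16x_j$ scales the form value by $256$ and fixes the residue class $12\pmod{20}$ since $16\cdot 12=192\equiv 12\pmod{20}$. The paper in fact states this lemma without proof, and your one-line verification is precisely the elementary observation the authors are implicitly relying on.
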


\end{document}